\documentclass[12pt]{article}
\usepackage{amsmath,amsthm}
\usepackage{amssymb}
\usepackage{enumerate}
\usepackage{graphicx,tikz}

\usepackage[mathscr]{eucal}
\usepackage[cm]{fullpage}
\usepackage{color}
\usepackage[latin1]{inputenc}

\theoremstyle{plain}
\newtheorem{theorem}{Theorem}[section]
\newtheorem{proposition}[theorem]{Proposition}
\newtheorem{lemma}[theorem]{Lemma}
\newtheorem{corollary}[theorem]{Corollary}

\theoremstyle{remark}



\numberwithin{equation}{section}

\frenchspacing

\begin{document}

\title{The rank of the inverse semigroup of partial automorphisms on a finite fence}

\author{J. Koppitz and T. Musunthia\footnote{This research is partially supported by the
Thailand Research Fund, Grant No. MRG6180296 and Research Fund of
Faculty of Science, Silpakorn University, Grant No. SRF-JRG-2561-08.}}

\maketitle

\renewcommand{\thefootnote}{}

\footnote{2010 \emph{Mathematics Subject Classification}: 20M18, 20M20}

\footnote{\emph{Key words}: partial injections, finite transformation semigroup, fence, rank,
generators}

\renewcommand{\thefootnote}{\arabic{footnote}}
\setcounter{footnote}{0}

\begin{abstract}
A fence is a particular partial order on a (finite) set, close to
the linear order. In this paper, we calculate the rank of the semigroup $%
\mathcal{FI}_{n}$ of all order-preserving partial injections on an $n$%
-element fence. In particular, we provide a minimal generating set for $%
\mathcal{FI}_{n}$. In the present paper, $n$ is odd since this problem for
even $n$ was already solved by I. Dimitrova and J. Koppitz.
\end{abstract}

\section{Introduction}
Let $n\in \mathbb{N}$ and denote by $\mathcal{PT}_{n}$ the semigroup (under
composition) of all partial transformations on the set $\overline{n}%
:=\{1,\ldots ,n\}$ of the first $n$ natural numbers. The set $\mathcal{I}_{n}
$ of all partial injections on $\overline{n}$ forms an inverse subsemigroup
of $\mathcal{PT}_{n}$. For more information about the symmetric inverse
semigroup $\mathcal{I}_{n}$, we refer the reader to O. Ganyushkin and
V. Mazorchuk's book \cite{Ganyushkin&Mazorchuk:2009}.

Let $\preceq $ be any partial order on $\overline{n}$. The pair $(\overline{n%
},\preceq )$ can be regarded as a digraph. Let $\alpha \in \mathcal{PT}_{n}$.
Then $\alpha $ is called order-preserving on $\overline{n}$ with respect
to $\preceq $ if $a\preceq b\Rightarrow a\alpha \preceq b\alpha $, for all
$a,b\in dom\alpha $. If $\alpha \in \mathcal{I}_{n}$ is order-preserving then it is a
partial injective endomorphisms on the digraph $(\overline{n},\preceq )$. Clearly, the
set $\mathcal{I}End(\overline{n},\preceq )$ of all partial injective endomorphisms on $(%
\overline{n},\preceq )$ forms a submonoid of $\mathcal{I}_{n}$, which has
not to be inverse, in general. A regular element $\alpha $ in $\mathcal{I}End(%
\overline{n},\preceq )$ is characterized by the following property:%
\begin{equation*}
a\preceq b\Leftrightarrow a\alpha \preceq b\alpha, \text{ for all }a,b\in dom\alpha\text{.}
\end{equation*}%
Such regular elements in $\mathcal{I}End(\overline{n},\preceq )$ are called partial
automorphisms on $(\overline{n},\preceq )$. The set $\mathcal{P}Aut(\overline{n},
\preceq)$ of all strong partial automorphisms on $(\overline{n},\preceq )
$ forms an inverse subsemigroup of $\mathcal{I}End(\overline{n},\preceq )$.

A very important particular and natural case occurs when a linear order $\leq $,
e.g. that one induced by the usual order on the natural numbers, is
considered. The monoid $\mathcal{PIO}_{n}$ of all partial order-preserving
injections on $(\overline{n},\leq )$ has been extensively studied. Basic
information about the monoid of $\mathcal{PIO}_{n}$, one can find in
\cite{Fernandes&Gomes&Jesus:2005}. In \cite{Umar:1993}, the author considers
generating sets of the semigroup of all partial
injective decreasing maps of $(\overline{n},\leq )$, a submonoid of $\mathcal{PIO}%
_{n}$. The maximal subsemigroups of the ideals of some semigroups of partial
injections on $(\overline{n},\leq )$ were determined by I. Dimitrova and J. Koppitz \cite{Dimitrova&Koppitz:2009}.
In \cite{Al-Kharousi&Kehinde&Umar:2010}, the authors consider distance-preserving
injections on $(\overline{n},\leq )$. They study the algebraic structure of such semigroups,
e.g. the Green's relations.

A non-linear order close to a linear order in some sense is the so-called
zig-zag order. The pair $(\overline{n},\preceq )$ is called zig-zag poset or
fence if
\begin{eqnarray*}
1 &\prec &2\succ \cdots \prec n-1\succ n\text{ or }1\succ 2\prec \cdots
\succ n-1\prec n\text{ if }n\text{ is odd} \\
\text{and }1 &\prec &2\succ \cdots \succ n-1\prec n\text{ or }1\succ 2\prec
\cdots \prec n-1\succ n\text{ if }n\text{ is even.}
\end{eqnarray*}%
The definition of the partial order $\preceq $ is self-explanatory. Observe
that every element in a fence is either minimal or maximal.

If the domain of an $\alpha \in $ $\mathcal{PT}_{n}$ is $\overline{n}$, i.e.
$dom\alpha =\overline{n}$, then $\alpha $ is called (full) transformation on
$\overline{n}$. The set $\mathcal{T}_{n}$ of all full transformations on $%
\overline{n}$ forms a submonoid of $\mathcal{PT}_{n}$. The monoid $\mathcal{%
TF}_{n}$ of all order-preserving transformations within $\mathcal{T}_{n}$
(with respect to $\preceq $), i.e. of all endomorphisms on $(\overline{n}%
,\preceq )$, was first investigated by J.D. Currie and T.I. Visentin in \cite{Currie&Visentin:1991} and by
A. Rutkowski \cite{Rutkowski:1992}. In \cite{Currie&Visentin:1991}, by using generating functions, the authors calculate
the cardinality of $\mathcal{TF}_{n}$ for the case that $n$ is even. On the
other hand, an exact formula for the number of endomorphisms on $(\overline{n%
},\preceq )$ for even as\ well as\ odd $n$ was given in \cite{Rutkowski:1992}. Recently, in
\cite{Musunthia&Koppitz&Vitor:2017}, the authors determine the rank of $\mathcal{TF}_{n}$. Recall that
the rank of a semigroup $S$, denoted by $\text{rank}S$, is the minimal size of a
generating set of $S,$%
\begin{equation*}
\text{rank}S:=\min \{\left\vert A\right\vert :A\subseteq S\text{, }A\text{
generates }S\}.
\end{equation*}%
In particular, a concrete generating set of $\mathcal{TF}_{n}$ of minimal
size is given in \cite{Musunthia&Koppitz&Vitor:2017}. Moreover, the authors characterize the
transformations on $\overline{n}$ preserving the fence. It is worth
mentioning that several other properties of monoids of order-preserving
transformations of a fence were also studied. In \cite{Jitman&Srithus&Worawannotai:2018,Tanyawong&Srithus&Chinram:2016}, the authors discussed
the regular elements of these monoids. Coregular elements (i.e. elements $%
\alpha $ with the property $\alpha =\alpha ^{3}$) of these monoids were
determined in \cite{Jendana&Srithus:2015}. Some relative ranks of the monoid of all partial
transformations preserving an infinite zig-zag order were determined in
\cite{Dimitrova&Koppitz&Lohapan:2017}.

In this paper, we will denote the semigroup of all partial
automorphisms on $(\overline{n},\preceq )$ by $\mathcal{FI}_{n}$, i.e. $\mathcal{P}Aut(\overline{n},
\preceq )=\mathcal{FI}_{n}$. This inverse semigroup was
first studied by I. Dimitrova and J. Koppitz in \cite{Dimitrova&Koppitz:2017}. The authors described the
Green's relations on $\mathcal{FI}_{n}$. In fact, they described only the $%
\mathcal{J}$- relation since the relations $\mathcal{L},\mathcal{R}$, and $%
\mathcal{H}$ are clear because $\mathcal{FI}_{n}$ is an inverse semigroup.
Moreover, they show that $\mathcal{FI}_{n}$ is generated by the set
\begin{equation*}
J_{n}:=\{\alpha \in \mathcal{FI}_{n}:\text{rank}\alpha \geq n-2\}\text{.}
\end{equation*}%
Recall that the rank of a (partial) transformation $\alpha $ (in symbols: $%
\text{rank}\alpha $) is the size of the range of $\alpha $ (in symbols: $\text{im}\alpha $%
), i.e. $\text{rank}\alpha =\left\vert \text{im}\alpha \right\vert $. For the case that $n$
is even, it is proved that $\text{rank}\mathcal{FI}_{n}=n+1$ and a concrete
generating set of $\mathcal{FI}_{n}$ with $n+1$ elements is given in \cite{Dimitrova&Koppitz:2017}. On the
other hand, the rank of $\mathcal{FI}_{n}$ is still an open problem,
whenever $n$ is odd. We will solve it in the present paper. We will determine
the rank of $\mathcal{FI}_{n}$ and give a concrete generating set of $\mathcal{FI}_{n}$
with minimal size in the case that $n$ is odd.

Without loss of generality, let $1\prec 2\succ 3\prec \cdots \succ n$. Such
fences are also called up-fences. The fence $1\succ 2\prec 3\succ \cdots
\prec n$ would be called down-fence. We avoid both notations up-fence and
down-fence. In fact, in order to check a fence is an up-fence or down-fence,
we need that $1$ and $2$ are comparable with respect to $\preceq $. Recall
that $x,y\in \overline{n}$ are comparable with respect to $\preceq$ if $x\prec y$ or $x=y$ or $x\succ
y $. Otherwise, $x$ and $y$ are called incomparable. But the restriction that
$1 $ and $2$ belong to the fence and are comparable is an unnecessary
restriction for the concept fence since instead of $\overline{n}$ one could
choose another $n$-element set or one could define $\preceq $ on $\overline{n%
}$ such that $1$ and $2$ are incomparable.

But if the fence $(\overline{n},\preceq )$ is defined as above (it is the most
natural way to do it in that way) then we observe that any $x,y\in
\overline{n}$ are comparable if and only if $x\in \{y-1,y,y+1\}$. For general
background on Semigroup Theory and standard notation, we refer the reader to
Howie`s book \cite{Howie:1995}.

\section{Main result}
Let us fix now an odd natural number $n$. Clearly, for $U\subseteq \overline{%
n}$, the partial identity mapping $id_{U}:=id_{\overline{n}}|_{U}$, i.e. the
identity mapping $id_{\overline{n}}$ on $\overline{n}$ restricted to $U$,
is a partial automorphism on $(\overline{n},\preceq )$. In
particular, $id_{\overline{n}}\in \mathcal{FI}_{n}$. There are exactly two
automorphisms in \ $\mathcal{FI}_{n}$, besides $id_{\overline{n}}$ the
reflection
\begin{equation*}
\gamma _{n}:=\left(
\begin{array}{ccccc}
1 & 2 & \cdots & n-1 & n \\
n & n-1 & \cdots & 2 & 1%
\end{array}%
\right) \text{.}
\end{equation*}%
Note that $\mathcal{FI}_{1}$ consists of the identity mapping on $\{1\}$ and
the empty transformation $\emptyset $. Since these both partial
transformations do not generate each other, the rank of $\mathcal{FI}_{1}$
is $2$. We suppose now that $n\geq 3$. I. Dimitrova and J. Koppitz proved $\mathcal{%
FI}_{m}=\left\langle J_{m}\right\rangle $ for all natural numbers $m$ in \cite{Dimitrova&Koppitz:2017}, which comprises several pages and a few
of lemmas in \cite{Dimitrova&Koppitz:2017}. For the case that $n$ is odd, one can shorten the proof.
Therefore, and for the sake of completeness, we will give a new proof for
the particular case that $n$ is odd. For this, we define a series of partial transformations of $J_{n}$. Let

$\alpha _{i}=\left(
\begin{array}{ccccccc}
1 & \cdots  & i-1 & i & i+1 & \cdots  & n \\
1 & \cdots  & i-1 &  & n & \cdots  & i+1%
\end{array}%
\right)$ for even $i\in \{2,\ldots ,n-1\}$,

$a_{i}=id_{\overline{n}\setminus\{i\}}$ for odd $i\in \overline{n}$,

$\beta _{2}^{odd} =\left(
\begin{array}{cccccc}
1 & 2 & 3 & 4 & \cdots  & n \\
2 &  &  & 4 & \cdots  & n%
\end{array}%
\right) \text{, }\beta _{n-1}^{odd}=\left(
\begin{array}{cccccc}
1 & 2 & 3 & \cdots  & n-1 & n \\
n-1 &  & 1 & \cdots  & n-3 &
\end{array}
\right)$,

$\beta _{2}^{even} =\left(
\begin{array}{cccccc}
1 & 2 & 3 & 4 & \cdots  & n \\
& 1 &  & 4 & \cdots  & n%
\end{array}%
\right) \text{, and }\beta _{n-1}^{even}=\left(
\begin{array}{cccccc}
1 & \cdots  & n-3 & n-2 & n-1 & n \\
3 & \cdots  & n-1 &  & 1 &
\end{array}%
\right)$.

In the case $n\geq 5$, we define%

$\alpha _{i,j}=\left(
\begin{array}{ccccccccccc}
1 & \cdots  & i-1 & i & i+1 & \cdots  & j-1 & j & j+1 & \cdots  & n \\
1 & \cdots  & i-1 &  & j-1 & \cdots  & i+1 &  & j+1 & \cdots  & n%
\end{array}%
\right)$  for $2\leq i<j\leq n-1$, where $i$ and $j$ have the same parity,

$\alpha _{1,j}=\left(
\begin{array}{cccccccc}
1 & 2 & \cdots  & j-1 & j & j+1 & \cdots  & n \\
& j-1 & \cdots  & 2 &  & j+1 & \cdots  & n%
\end{array}%
\right)$  and

$\alpha _{j,n}=\left(
\begin{array}{cccccccc}
1 & \cdots  & j-1 & j & j+1 & \cdots  & n-1 & n \\
1 & \cdots  & j-1 &  & n-1 & \cdots  & j+1 &
\end{array}%
\right)$ for odd $j\in \{3,\ldots ,n-2\}$, and

$\alpha _{1,n}=\left(
\begin{array}{ccccc}
1 & 2 & \cdots  & n-1 & n \\
& n-1 & \cdots  & 2 &
\end{array}%
\right)$.

In the case $n\geq 7$, we define%

$\beta _{i}^{odd}=\left(
\begin{array}{ccccccccc}
1 & 2 & 3 & \cdots  & i & i+1 & i+2 & \cdots  & n \\
i &  & 1 & \cdots  & i-2 &  & i+2 & \cdots  & n%
\end{array}%
\right)$ and

$\beta _{i}^{even}=\left(
\begin{array}{ccccccccc}
1 & \cdots  & i-2 & i-1 & i & i+1 & i+2 & \cdots  & n \\
3 & \cdots  & i &  & 1 &  & i+2 & \cdots  & n%
\end{array}%
\right)$ for even $i\in \{4,\ldots ,n-3\}$.

Note that $\beta
_{i}^{even}\beta _{i}^{odd}=id_{\overline{n}\setminus \{i-1,i+1\}}$ for each
even $i\in \{2,\ldots ,n-1\}$, $\alpha _{i}\alpha _{i}=id_{\overline{n}%
\setminus \{i\}}$ for each $i\in \overline{n}$, and $\alpha _{i,j}\alpha
_{i,j}=id_{\overline{n}\setminus \{i,j\}}$ for all $i<j\in \overline{n}$
having the same parity. Further, let $Par_{n}$ be the set of all $\delta
\in \mathcal{FI}_{n}$ such that $x$ and $x\delta$ have different parity for
some $x\in dom\delta$. First, we observe that each $\delta \in Par_{n}$ is
generated by elements of $\mathcal{FI}_{n}\setminus Par_{n}$ and $\{id_{\overline{n}}\}\cup \{\beta _{i}^{odd},\beta
_{i}^{even}:i\in \{2,4,\ldots ,n-1\}\}\subseteq J_{n}$.

\begin{lemma}\label{lmm1}
Let $\delta \in Par_{n}$. Then there are $\beta \in \mathcal{FI}%
_{n}\setminus Par_{n}$ and $l_{1},\ldots ,l_{p},r_{1}\ldots ,r_{p}\in \{id_{\overline{n}}\}\cup \{\beta
_{i}^{odd},\beta _{i}^{even}:i\in \{2,4,\ldots ,n-1\}\}$
with $\delta =l_{1}\cdots l_{p}\beta r_{1}\cdots r_{p}$.
\end{lemma}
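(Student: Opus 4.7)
The plan is to induct on the number $k$ of parity-changing vertices of $\delta$, i.e., on the number of $x \in dom\delta$ with $x$ and $x\delta$ of different parity. The key preliminary observation is that for any $\delta \in \mathcal{FI}_n$ every such $x$ must be isolated in the induced subgraph of $(\overline{n},\preceq)$ on $dom\delta$, and $x\delta$ must be isolated in $\text{im}\delta$. Indeed, fence-adjacent elements have opposite parities, so a fence-neighbour $x' \in dom\delta$ of $x$ would force $x\delta$ and $x'\delta$ to be comparable and hence of opposite parities, which in turn forces the parity of $x\delta$ to agree with that of $x$.

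For the induction step, pick any parity-changing vertex $x$ of $\delta$ and let $y = x\delta$. If $y$ is even, set $\beta' = \delta\,\beta_y^{even}$; the stated identity $\beta_y^{even}\beta_y^{odd} = id_{\overline{n}\setminus\{y-1,y+1\}}$ together with $\{y-1,y+1\} \cap \text{im}\delta = \emptyset$ (from the isolation of $y$) yields $\beta'\,\beta_y^{odd} = \delta$. Since the only parity change occurring in $\beta_y^{even}$ is at $y$, and $x$ is the unique $\delta$-preimage of $y$, the map $\beta'$ preserves parity at $x$ (because $y\beta_y^{even} = 1$, of the same parity as $x$) and inherits the parity behaviour of $\delta$ everywhere else; hence $\beta' \in \mathcal{FI}_n$ has exactly $k-1$ parity-changing vertices. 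In the remaining case $y$ odd (hence $x$ even), a symmetric argument applied on the left uses $\beta' = \beta_x^{odd}\,\delta$ together with $\delta = \beta_x^{even}\,\beta'$, relying on the analogous identity and on the isolation of $x$ in $dom\delta$.

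We then apply the inductive hypothesis to $\beta'$: when $\beta' \notin Par_n$ (in particular when $k = 1$) we simply take $\beta = \beta'$; otherwise the hypothesis yields a decomposition of $\beta'$ to which the new outer factor is appended on the appropriate side. A final bookkeeping step pads the resulting (possibly one-sided) factorisation with copies of $id_{\overline{n}}$ so that the same number $p$ of factors appears on each side, matching the form $l_1\cdots l_p\,\beta\,r_1\cdots r_p$ required by the statement.

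The main obstacle is checking that the inductive hypothesis genuinely propagates, namely that every remaining parity-changing vertex of $\beta'$ is still isolated in $dom\beta'$ and $\text{im}\beta'$. This holds because $\beta_y^{even}$ (resp.\ $\beta_x^{odd}$) is itself a partial automorphism, so its restriction to $\text{im}\delta$ (resp.\ $dom\delta$) is an order-isomorphism onto $\text{im}\beta'$ (resp.\ $dom\beta'$), and isolation in the fence-induced subgraph is invariant under such isomorphisms. Some care is also needed at the boundary indices $y \in \{2,n-1\}$, where $\beta_y^{odd}$ and $\beta_y^{even}$ are defined separately from the middle case, but the same identity $\beta_y^{even}\beta_y^{odd} = id_{\overline{n}\setminus\{y-1,y+1\}}$ continues to hold and the argument goes through without change.
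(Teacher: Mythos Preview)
Your proof is correct and follows essentially the same approach as the paper: reduce the number of parity-changing points one at a time by composing with $\beta_{x\delta}^{even}$ on the right (when $x$ is odd) or with $\beta_x^{odd}$ on the left (when $x$ is even), using the identity $\beta_i^{even}\beta_i^{odd}=id_{\overline{n}\setminus\{i-1,i+1\}}$ to recover $\delta$. Your write-up is more detailed than the paper's (the explicit isolation argument, the padding with $id_{\overline{n}}$), but note that your ``main obstacle'' is in fact automatic: once $\beta'\in\mathcal{FI}_n$ (immediate since $\mathcal{FI}_n$ is a semigroup), your own preliminary observation already guarantees that every parity-changing vertex of $\beta'$ is isolated, so no separate propagation check is needed.
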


\begin{proof}
Let $x\in dom\delta $ such that $x$ and $x\delta $ have different parity. If
$x$ is odd then $x\delta $ is even$,$ where $x\delta -1,x\delta +1\notin
\text{im}\delta $. Let $\beta :=id_{\overline{n}}\delta \beta _{x\delta }^{even}$. Then it is easy
to verify that $\left\vert \left\{ w\in dom\delta :w\text{ and }w\delta
\text{ have different parity}\right\} \right\vert >\left\vert \left\{ w\in
dom\beta :w\text{ and }w\beta \text{ have different parity}\right\}
\right\vert $ and $\beta \beta _{x\delta }^{odd}=\delta \beta _{x\delta }^{even}\beta
_{x\delta }^{odd}=\delta$ since $\text{im}\delta \subseteq dom\beta
_{x\delta }^{even}$. If $x$ is even then $x\delta $ is odd and $%
x-1,x+1\notin dom\delta $. In this case, let $\beta :=\beta _{x}^{odd}\delta id_{\overline{n}}$
and we observe that $\left\vert \left\{ w\in dom\delta :w\text{ and }w\delta
\text{ have different parity}\right\} \right\vert >\left\vert \left\{ w\in
dom\beta :w\text{ and }w\beta \text{ have different parity}\right\}
\right\vert $, where $\beta _{x}^{even}\beta =\beta _{x}^{even}\beta _{x}^{odd}\delta
=\delta$ since $\text{im}\beta _{x}^{odd}\supseteq dom\delta $. \
Now, we can consider $\beta $ instead of $\delta $. Since the domain of $%
\delta $ is finite, we obtain successively after $p$ steps $%
l_{1},\ldots ,l_{p},r_{1}\ldots ,r_{p}\in \{id_{\overline{n}}\}\cup \{\beta
_{i}^{odd},\beta _{i}^{even}:i\in \{2,4,\ldots ,n-1\}\}$ and a $\widetilde{\beta }%
\in \mathcal{FI}_{n}\setminus Par_{n}$ such that $\delta =l_{1}\cdots l_{p}%
\widetilde{\beta }r_{1}\cdots r_{p}$.
\end{proof}

The following fact will be used frequently without to refer it.
If $U$ is a convex subset of the domain of a $\delta \in \mathcal{FI}_{n}$ then
$U\delta =\{x\delta :x\in U\}$ is also a convex set. Next, we show that any $\delta \in \mathcal{FI}_{n}\setminus J_{n}$ with a
convex domain can be generated by elements of $J_{n}$ and a
transformation $\beta \in \mathcal{FI}_{n}$ with $\text{rank}\beta >\text{rank}\delta $.

\begin{lemma}\label{lmm2}
Let $\delta \in \mathcal{FI}_{n}\setminus J_{n}$. If $dom\delta$ is a convex
set then there are $\beta \in \mathcal{FI}_{n}$ with $\text{rank}\beta =\text{rank}\delta
+1$ and $\alpha \in J_{n}$ such that $\delta =\alpha \beta $.
\end{lemma}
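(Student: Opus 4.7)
My plan is to produce the factorization very concretely: define $\beta$ to be $\delta$ augmented by a single pair $(j,v)$ and set $\alpha := id_{\overline{n}\setminus\{j\}}$. With this choice $\alpha$ lies in $J_n$ automatically, since $\text{rank}\alpha = n-1 \geq n-2$; the rank condition $\text{rank}\beta = \text{rank}\delta+1$ is immediate; and $\alpha\beta = \delta$ because $\alpha$ is the identity on $dom\delta$ and is undefined exactly at the one element $j$ that $\beta$ newly adds. The whole work therefore reduces to choosing $j$ and $v$ so that $\beta := \delta \cup \{(j,v)\}$ remains a partial automorphism of $(\overline{n},\preceq)$.

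I intend to make $j$ and $v$ \emph{fence-isolated} from $dom\delta$ and $\text{im}\delta$ respectively. Writing $dom\delta = \{k,\ldots,m\}$ and $\text{im}\delta = \{p,\ldots,q\}$ (both convex, as noted just before the lemma), I will pick $j \in \overline{n}\setminus\{k-1,k,\ldots,m+1\}$ and $v \in \overline{n}\setminus\{p-1,p,\ldots,q+1\}$, so that neither $j$ nor $v$ lies in or adjacent to the respective block. Since any two elements of the fence are comparable iff they differ by at most one, such a $j$ is incomparable with every element of $dom\delta$, and likewise $v$ with every element of $\text{im}\delta$. Consequently the only nontrivial $\preceq$-relation involving $j$ in $dom\beta$ is $j\preceq j$ (and analogously for $v$ in $\text{im}\beta$), so $\beta$ both preserves and reflects $\preceq$; injectivity of $\beta$ is secured by $v\notin\text{im}\delta$. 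The degenerate case $dom\delta = \emptyset$ is disposed of by simply taking $j = v = 1$.

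The only real obstacle is the existence of admissible $j$ and $v$, and this is precisely where the assumption $\delta \notin J_n$, i.e.\ $\text{rank}\delta \leq n-3$, is used. The $j$-forbidden set has cardinality at most $m-k+3 = \text{rank}\delta+2 \leq n-1$, and the $v$-forbidden set at most $\text{rank}\delta+2 \leq n-1$, so each of the two corresponding complements in $\overline{n}$ is nonempty. Once $j$ and $v$ are chosen, the remaining checks that $\beta \in \mathcal{FI}_n$, that $\text{rank}\beta = \text{rank}\delta+1$, and that $\alpha\beta = \delta$ are short book-keeping and I do not anticipate further difficulty.
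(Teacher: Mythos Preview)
Your proposal is correct and matches the paper's own proof almost exactly: the paper also extends $\delta$ by a single pair $(w,x)$ with $w$ isolated from $dom\delta$ and $x$ isolated from $\text{im}\delta$, and factors as $\delta = id_{\overline{n}\setminus\{w\}}\beta$. The existence argument in the paper is phrased as ``$w-1,w,w+1\in\{0,1,\ldots,n,n+1\}\setminus dom\delta$'' (and analogously for $x$), which is precisely your condition $j\notin\{k-1,\ldots,m+1\}$ unwound; the cardinality bound $\text{rank}\delta+2\le n-1$ is the same in both.
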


\begin{proof}
Suppose that $dom\delta $ is a convex set.

Since both intervals $dom\delta$ and $\text{im}\delta$ have a lenght less than or equal
$n-3$, there are $x,w\in\bar{n}$ such that $w-1,w,w+1\in\{0,1,\dots,n,n+1\}\setminus
dom\delta$ and $x-1,x,x+1\in\{0,1,\dots,n,n+1\}\setminus \text{im}\delta$.  We define a
transformation $\beta $ by
\begin{equation*}
r\beta :=r\delta \text{ for all }r\in dom\delta \text{ and }w\beta :=x\text{.%
}
\end{equation*}%
Clearly, $\beta \in \mathcal{FI}_{n}$ with $\text{rank}\beta =\text{rank}\delta +1$ and $\delta =id_{\overline{n}\setminus \{w\}}\beta $.
\end{proof}

We note that the empty set is convex. So, the empty
transformation $\emptyset $ is a product of two transformations with rank $%
\geq 1$. Now, we can prove the following proposition, which is a particular case of
Theorem 3.15 in \cite{Dimitrova&Koppitz:2017} for $n$ is odd.

\begin{proposition}\label{prop3}
$\mathcal{FI}_{n}$ is generated by $J_{n}$.
\end{proposition}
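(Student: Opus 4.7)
I would argue by strong induction on the corank $k := n - \text{rank}(\delta)$. The base case $k \leq 2$ is immediate, since $\text{rank}(\delta) \geq n-2$ forces $\delta \in J_n$.

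For the inductive step $k \geq 3$, I first apply Lemma~\ref{lmm1} to reduce to the parity-preserving case. If $\delta \in Par_n$, the lemma yields $\delta = l_1 \cdots l_p\,\tilde\beta\,r_1 \cdots r_p$ with every $l_i, r_i$ lying in $\{id_{\overline n}\} \cup \{\beta_i^{odd},\beta_i^{even} : i \in \{2,4,\dots,n-1\}\} \subseteq J_n$ and $\tilde\beta \in \mathcal{FI}_n \setminus Par_n$. A short check using $x\delta \pm 1 \notin \text{im}(\delta)$ (respectively $x\pm 1 \notin dom(\delta)$) shows that $dom(\tilde\beta)$ and $\text{im}(\tilde\beta)$ have the same cardinality as those of $\delta$, so $\text{rank}(\tilde\beta) = \text{rank}(\delta)$; thus I may replace $\delta$ by $\tilde\beta$ and assume $\delta \in \mathcal{FI}_n \setminus Par_n$. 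If in addition $dom(\delta)$ is convex (a contiguous interval of $\overline n$), Lemma~\ref{lmm2} furnishes $\delta = \alpha\beta$ with $\alpha \in J_n$ and $\text{rank}(\beta) = \text{rank}(\delta)+1$, and the inductive hypothesis applied to $\beta$ concludes.

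The main obstacle is a parity-preserving $\delta$ whose domain is not convex, since the construction in Lemma~\ref{lmm2} can fail there (for instance, if $dom(\delta)$ is the antichain of every odd element of $\overline n$, no $w \in \overline n$ satisfies $\{w-1,w,w+1\} \cap dom(\delta) = \emptyset$). To treat this, I would first observe that every partial identity $id_U$ lies in $\langle J_n\rangle$: indeed $id_{\overline n \setminus\{i\}} = a_i$ if $i$ is odd and $id_{\overline n \setminus\{i\}} = \alpha_i\alpha_i$ if $i$ is even, both of which are in $J_n$, whence $id_U = \prod_{i \notin U} id_{\overline n \setminus\{i\}} \in \langle J_n\rangle$. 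Hence it suffices to produce any extension $\delta^+ \in \mathcal{FI}_n$ of $\delta$ with $\text{rank}(\delta^+) > \text{rank}(\delta)$, because then $\delta = id_{dom(\delta)}\cdot \delta^+$ and the inductive hypothesis dispatches $\delta^+$. Constructing such a $\delta^+$ is the technical heart of the non-convex case: I would choose $y \in \overline n \setminus dom(\delta)$ together with a compatible $z \in \overline n \setminus \text{im}(\delta)$ such that $y \mapsto z$ extends $\delta$ to a strong partial automorphism. The existence of a compatible pair $(y,z)$ requires a case analysis on the parities and positions of the missing points, and, in the most pathological configurations (where every candidate $y$ has two neighbors in $dom(\delta)$ whose images are too far apart to share a common upper or lower cover), may be arranged only after first pre- or post-multiplying $\delta$ by a suitable $\alpha_{i,j} \in J_n$ to reshape its gap structure. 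Iterating this reshaping reduces either the number of connected components of $dom(\delta)$ or the corank, eventually bringing the problem into the scope of Lemma~\ref{lmm2} and closing the induction.
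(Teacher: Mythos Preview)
Your overall scaffolding matches the paper's proof: induction on corank, Lemma~\ref{lmm1} to reduce to the parity-preserving case, and Lemma~\ref{lmm2} for convex domains. Your observation that every $id_U$ lies in $\langle J_n\rangle$ (so that any genuine extension $\delta^+\supsetneq\delta$ immediately gives $\delta=id_{dom\delta}\,\delta^+$) is a clean simplification that the paper does not isolate explicitly.

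The gap is in the non-convex case, which you correctly flag as the technical heart but then leave unjustified. Your plan hinges on two claims that you do not establish: (i) that pre- or post-multiplication by some $\alpha_{i,j}$ (or other $J_n$-element) always produces a map that \emph{does} admit a one-point extension, and (ii) that this reshaping process terminates. Your proposed termination measure, ``number of connected components of $dom(\delta)$ or corank,'' does not work as stated: the reshaping you describe (conjugating by elements whose domain contains $im\delta$, or dually) preserves both the corank and the component structure of $dom(\delta)$. For example, with $n=7$ and $\delta$ the bijection $1\mapsto3,\ 3\mapsto7,\ 5\mapsto1,\ 7\mapsto5$ on the odd antichain, no direct one-point extension exists; post-multiplying by $\alpha_{2,6}$ fixes this, but neither the corank nor the four singleton components of the domain have changed. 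So you need a different invariant that genuinely decreases under reshaping, and you have not supplied one.

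The paper handles exactly this difficulty by a long, explicit normalization: it picks the first gap $x$ after the leftmost block $\{z,\dots,x-1\}$ of $dom\delta$, forces that block to be the one with smallest image, forces the image to start at $1$ or $2$, and then does a multi-page case analysis (on whether $x+1\in dom\delta$, whether $a+2\in im\delta$, parities of various boundary points, etc.) to manufacture a situation where $x\mapsto a+1$ can be adjoined. Each case either reduces to an earlier case or reaches the extension directly. Your plan would need to reproduce this analysis, or supply an alternative invariant that provably drops under your reshaping moves; without that, the argument does not close.
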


\begin{proof}
Let $3\leq r\leq n$ and suppose that $\beta \in \left\langle
J_{n}\right\rangle $ for all $\beta \in \mathcal{FI}_{n}$ with $\text{rank}\beta
>n-r$. Let now $\delta \in \mathcal{FI}_{n}$ with $\text{rank}\delta =n-r$. By
Lemma \ref{lmm1}, we can restrict us to the case $\delta \in \mathcal{FI}%
_{n}\setminus Par_{n}$. Now, we choose $x\in \overline{n}\setminus dom\delta $
in the following way: There is $z<x$ such that $w\notin dom\delta $ for all $%
w<z$ and $\{z,\ldots ,x-1\}\subseteq dom\delta $. Clearly, if $dom\delta
\neq \{r+1,\ldots ,n\}$ then such $x$ exists. Note, by Lemma \ref{lmm2}, we can skip
the case that $dom\delta =\{r+1,\ldots ,n\}$.

Now, we will show that one can restrict oneself to the case that $w\delta
>\{z,\ldots ,x-1\}\delta$ for all $w\in dom\delta \setminus \{z,\ldots ,x-1\}$ and $%
(x-2)\delta <(x-1)\delta $, whenever $x-2\in dom\delta $. For this let $%
\{r_{1}<\cdots <r_{s}\}$ be a convex interval in $dom\delta $ with $%
r_{1}-1,r_{s}+1\in \{0,1,\ldots ,n,n+1\}\setminus dom\delta $ such that $%
w\delta >\{r_{1},\ldots ,r_{s}\}\delta$ for all $w\in dom\delta \setminus \{r_{1},\ldots
,r_{s}\}$. Suppose that $x-1\notin \{r_{1},\ldots ,r_{s}\}$, i.e. $r_{1}>2$. If $r_{s}$ is
odd then we define $\sigma :=\gamma _{n}\alpha _{n-r_{s}}\gamma _{n}\delta $%
, where $\delta =\gamma _{n}\alpha _{n-r_{s}}\gamma _{n}\gamma _{n}\alpha
_{n-r_{s}}\gamma _{n}\delta $ and $\alpha _{0}=\gamma _{n}$. If $r_{s}$ is even and $r_{1}$ is odd then we
define $\sigma :=\alpha _{r_{1}-1}\delta \gamma _{n}$, where $\delta =\alpha
_{r_{1}-1}\alpha _{r_{1}-1}\delta \gamma _{n}\gamma _{n}$. If both $r_{1}$
and $r_{s}$ are even and $p:=\max \{z\delta ,\ldots ,(x-1)\delta \}$ is odd
then we define $\sigma :=\delta \gamma _{n}\alpha _{n-p}\gamma _{n}$, where $%
\delta =\delta \gamma _{n}\alpha _{n-p}\gamma _{n}\gamma _{n}\alpha
_{n-p}\gamma _{n}$. If all $r_{1}$, $r_{s}$, and $p$ are even then $1\notin im\delta$ and we define
$\sigma :=\delta \alpha _{1,p+1}$, where $\delta =\delta \alpha _{1,p+1}\alpha _{1,p+1}$. Hence, $\delta $ is the product of $%
\sigma $ and transformations in $J_{n}$. Straightforward calculations show
that there are $\widetilde{z}<\widetilde{x}\in \overline{n}$ such that $\{\widetilde{z}%
,\ldots ,\widetilde{x}-1\}\subseteq dom\sigma $ is a convex set and $%
\widetilde{x},w\notin dom\sigma $ for all $w<\widetilde{z}$, where $w\sigma
>\{\widetilde{z},\ldots ,\widetilde{x}-1\}\sigma $ for all $w\in dom\sigma
\setminus \{\widetilde{z},\ldots ,\widetilde{x}-1\}$. Suppose that $%
\widetilde{x}-2\in dom\sigma $ with $(\tilde{x}-2)\sigma >(\tilde{x}-1)\sigma $. If $%
\widetilde{z}$ is odd then we define $\upsilon :=\sigma \gamma _{n}\alpha
_{n-\widetilde{z}\delta }\gamma _{n}$, where $\sigma =\sigma \gamma
_{n}\alpha _{n-\widetilde{z}\delta }\gamma _{n}\gamma _{n}\alpha _{n-%
\widetilde{z}\delta }\gamma _{n}$. If $\widetilde{z}$ is even and $%
\widetilde{x}-1$ is odd then we define $\upsilon :=\gamma _{n}\alpha _{n-%
\widetilde{x}+1}\gamma _{n}\sigma $, where $\sigma =\gamma _{n}\alpha _{n-%
\widetilde{x}+1}\gamma _{n}\gamma _{n}\alpha _{n-\widetilde{x}+1}\gamma
_{n}\sigma $. If both $\widetilde{z}$ and $\widetilde{x}-1$ are even then we
define $\upsilon :=\alpha _{\widetilde{z}-1,\widetilde{x}}\sigma $, where $%
\sigma =$ $\alpha _{\widetilde{z}-1,\widetilde{x}}\alpha _{\widetilde{z}-1,%
\widetilde{x}}\sigma $. Hence, $\sigma $ is the product of $\upsilon $ and
transformations in $J_{n}$. Simple calculations show that there are $%
\widehat{z}<\widehat{x}\in \overline{n}$ such that $\{\widehat{z},\ldots ,\widehat{x}%
-1\}\subseteq dom\upsilon $ is a convex set and $\widehat{x},w\notin
dom\upsilon $ for all $w<\widehat{z}$, where $w\upsilon >\{\widehat{z}%
,\ldots ,\widehat{x}-1\}\upsilon $ for all $w\in dom\upsilon \setminus \{%
\widehat{z},\ldots ,\widehat{x}-1\}$ and $(\widehat{x}-2)\upsilon <(\widehat{%
x}-1)\upsilon $, whenever $\widetilde{x}-2\in dom\nu $. This proves the desired restriction.

Without loss of generality, we can assume that $z\delta \in
\{1,2\}$. Otherwise, we consider the transformation $\widehat{\delta }%
:=\delta \left(
\begin{array}{cccc}
3 & 4 & \cdots & n \\
1 & 2 & \cdots & n-2%
\end{array}%
\right) ^{\left\lfloor \frac{z\delta -1}{2}\right\rfloor }$, where $\delta =%
\widehat{\delta }\left(
\begin{array}{cccc}
1 & 2 & \cdots & n-2 \\
3 & 4 & \cdots & n%
\end{array}%
\right) ^{\left\lfloor \frac{z\delta -1}{2}\right\rfloor }$ and $z\widehat{%
\delta }\in \{1,2\}$. We put now
\begin{equation*}
a:=(x-1)\delta \text{.}
\end{equation*}%
Since $x\notin dom\delta $ and $w\delta >(x-1)\delta $ for all $w\in
dom\delta \setminus \{z,\ldots ,x-1\}$, either $a+1\notin \text{im}\delta $ or $dom\delta $
is a convex set. In the latter case, by Lemma \ref{lmm2}, there are $\beta \in \mathcal{FI}_{n}$ with $%
\text{rank}\beta =\text{rank}\delta +1$ and $\alpha \in J_{n}$ such that $\delta =\alpha \beta $.
Since $\beta \in \left\langle J_{n}\right\rangle $ by the inductive assumption, we obtain $
\delta =\alpha\beta \in \left\langle J_{n}\right\rangle $. So, we have to
consider the case $a+1\notin \text{im}\delta $, where either $x+1\in dom\delta $ or
$x+1\notin dom\delta .$

Suppose that $x+1\in dom\delta $. If $(x+1)\delta =a+2$ then we define a partial
transformation $\beta _{1}$ by
\begin{equation*}
w\beta _{1}:=w\delta \text{ for all }w\in dom\delta \text{ and }x\beta
_{1}:=a+1\text{.}
\end{equation*}%
Clearly, $\beta _{1}\in \mathcal{FI}_{n}$ with $\text{rank}\beta _{1}>n-r.$ We have
$\delta =\alpha _{a}\beta _{1}\in \left\langle J_{n}\right\rangle $. Now, we
show that there is $\widehat{\delta }\in \mathcal{FI}_{n}$ with $w\widehat{%
\delta }=w\delta $ for $w\in dom\delta \cap \{1,\ldots ,x-1\}$ and $(x+1)%
\widehat{\delta }=a+2$ such that $\delta \in \langle J_{n},\widehat{%
\delta }\rangle $. If it is the case then we have $\delta \in
\left\langle J_{n}\right\rangle $ since $\widehat{\delta }\in \left\langle
J_{n}\right\rangle $ by the previous considerations.

Suppose that $a+2\in \text{im}\delta $. Then there is $y\in dom\delta $ with $%
y\delta =a+2$. Since $w\delta \leq a$ for all $w\in dom\delta $ with $w<x$, we
conclude that $y\geq x+1$. If $y+1\notin dom\delta $ then we put $\beta
_{2}:=\alpha _{x,y+1}$, whenever $y<n$ and $\beta _{2}:=\alpha _{x}$,
whenever $y=n$. Clearly, $\beta _{2}\in \mathcal{FI}_{n}$. Then $\delta =\beta _{2}\beta _{2}\delta $, where $w\beta
_{2}\delta =w\delta $ for $w\in dom\delta \cap \{1,\ldots ,x-1\}$ and $%
(x+1)\beta _{2}\delta =a+2$. Now, we consider the case that $y+1\in dom\delta
$. This provides $(y+1)\delta =a+3$ and $y-1\notin dom\delta $ since $%
a+1\notin \text{im}\delta $. If $y-1$ is even then $\delta =\alpha _{y-1}\alpha
_{y-1}\delta $, where $w\alpha _{y-1}\delta =w\delta $ for $w\in dom\delta
\cap \{1,\ldots ,x-1\}$, $n\alpha _{y-1}\delta =a+2$, and $n+1\notin
dom(\alpha _{y-1}\delta) $ (which provides a previous case). If $y-1$ is odd
and there is an odd \ $u>y+1$ with $u\notin dom\delta $ then we have $\delta
=\alpha _{y-1,u}\alpha _{y-1,u}\delta $, where $w\alpha _{y-1,u}\delta
=w\delta $ for $w\in dom\delta \cap \{1,\ldots ,x-1\}$, $(u-1)\alpha
_{y-1,u}\delta =a+2$, and $u\notin dom(\alpha _{y-1,u}\delta) $ (which provides
a previous case). Let now $y-1$ be odd and let each $u\in \overline{n}%
\setminus dom\delta $ with $u>y+1$ be even. Then we observe that $w-1,w+1\in
\{y,\ldots ,n\}\delta $, for each even $w\in \{y,\ldots ,n\}\delta $. Hence,
there is an odd $k\in \{y,\ldots ,n\}\delta $ with $k+1\notin \{y,\ldots
,n\}\delta $. Because $(x+1)\delta $ ($>y\delta $) is even,
this implies the existence of an even $u<y$ with $u\notin dom\delta $.

If $u>x$ then we have $\delta =\alpha _{u}\alpha _{u}\delta $, where $%
w\alpha _{u}\delta =w\delta $ for $w\in dom\delta \cap \{1,\ldots ,u-1\}$
with $u>x$ and $(n-y+1)\alpha _{w}\delta =a+2$, where $(n-y+1+1)\notin
dom(\alpha _{u}\delta) $. We have again a previous case. Finally, we have to
consider the case $u<x$. In particular, $1,2\notin
dom\delta $. We have $\alpha _{2}\gamma _{n}\gamma _{n}\alpha _{2}\delta
=\delta $, where $w\gamma _{n}\alpha _{2}\delta =(w+2)\delta $, for all $w\in
\{1,\ldots ,n-2\}$ with $w+2\in dom\delta $. Then $(y-2)\gamma
_{n}\alpha _{2}\delta =a+1$, $y-3$ is odd and $n\notin dom(\gamma _{n}\alpha
_{2}\delta) $, i.e. $\gamma _{n}\alpha _{2}\delta $ is a previous case.

Admit now $a+2\notin \text{im}\delta $. If $(x+1)\delta +1\notin \text{im}\delta $ then we
have $\delta =\delta \alpha _{a+1,(x+1)\delta +1}\alpha _{a+1,(x+1)\delta +1}
$, where $w\delta \alpha _{a+1,(x+1)\delta +1}=w\delta $, for $w\in
dom\delta \cap \{1,\ldots ,x-1\}$, and $(x+1)\delta \alpha
_{a+1,(x+1)\delta +1}=a+2$. Suppose now that $(x+1)\delta +1\in \text{im}\delta $, i.e. $(x+1)\delta -1\notin im\delta $.
If $(x+1)\delta $ is even then $a+2$ is even and we put $\beta _{3}:=\alpha
_{a+2}.$ If $(x+1)\delta $ is odd then we put $\beta _{3}:=\alpha _{(x+1)\delta
-1}$. Clearly, $\beta _{3}\in \mathcal{FI}_{n}$, $\delta =\delta \beta _{3}\beta _{3}$, where $w\delta \beta
_{3}=w\delta $ for $w\in dom\delta \cap \{1,\ldots ,x-1\}$, and $a+2\notin im\delta \beta _{3}$. Since $%
(x+1)\delta -1\notin \text{im}\delta $, we can conclude that $(x+1)\delta \beta
_{3}+ 1\notin \text{im}\delta \beta _{3}$ (the previous case for $\delta \beta _{3}$).

We finish the proof with the case that $x+1\notin dom\delta $. If $a+2\notin
\text{im}\delta $ we define a partial transformation $\beta _{4}$ by
\begin{equation*}
w\beta _{4}:=w\delta \text{ for }w\in dom\delta \text{ and }x\beta _{4}:=a+1%
\text{.}
\end{equation*}%
It is easy to verify that $\beta _{4}\in
\mathcal{FI}_{n}$. Then we have $\delta =id_{\overline{n}
\setminus \{x\}}\beta _{4}$. Suppose that there is $y\in
dom\delta $ with $y\delta =a+2$. We can assume that $y+1\notin dom\delta $.
Otherwise, $y-1\notin dom\delta $ and there is $i\in \{0,1\}$ such that $x+i$
is even and $\delta =\alpha _{x+i}\alpha _{x+i}\delta $, where $w\alpha
_{x+i}\delta =w\delta $ for $w\in dom\delta \cap \{1,\ldots ,x-1\}$. In
particular, we have $(n-y+x+i+1)\alpha _{x+i}\delta =a+2$ and $(n-y+x+i+2)\notin
dom(\alpha _{x+i}\delta) $. Here we have a previous case (for $\alpha
_{x+i}\delta $). Having $y+1\notin dom\delta $, we obtain $\delta =\alpha
_{x,y+1}\alpha _{x,y+1}\delta $, where $w\alpha _{x,y+1}\delta =w\delta $
for $1\leq w\leq x$ and $(x+1)\alpha _{x,y+1}\delta =a+2$, a previous case
for the transformation $\alpha _{x,y+1}\delta $.

Altogether, we have shown that there is $\widehat{\delta }\in \mathcal{FI}%
_{n}$ with $w\widehat{\delta }=w\delta $ for $w\in dom\delta \cap \{1,\ldots
,x-1\}$ and $(x+1)\widehat{\delta }=a+2$ such that $\delta \in \langle
J_{n},\widehat{\delta }\rangle$. Consequently, we have shown that
$\mathcal{FI}_{n}\subseteq \left\langle J_{n}\right\rangle $.
\end{proof}

Now, we construct a generating set of $\mathcal{FI}_{n}$ of minimal size.
By Proposition \ref{prop3} and since no element in $J_{n}$
can be generated by elements which not belong to $J_{n}$, we have to find a
generating set of $J_{n}$ of minimal size. For this, we define $G_{3}:=\{\gamma _{3},\alpha _{1},\alpha _{2},\beta
_{2}^{odd},\beta _{2}^{even}\}$ and $G_{n}:=\{\gamma _{n}\}\cup \{\alpha
_{i}:i\in \{1,,\ldots ,\frac{n+1}{2}\}$ is odd$\}\cup \{\alpha _{i}:i\in
\{2,4,\ldots ,n-3\}\}\cup \{\beta _{i}^{odd},\beta _{i}^{even}:i\in
\{2,\ldots ,\frac{n+1}{2}\}$ is even$\}\cup \{\alpha _{i,j}:i,j\in \overline{%
n}$ are odd with $4\leq j-i<n-1$, $i\leq n-j+1\}$, whenever $n\geq 5$.

\begin{lemma}\label{lmm(bf4)}
Let $\delta \in J_{n}\cap Par_{n}$. Then there is
exactly one $x\in dom\delta $ such that $x$ and $x\delta $ have different
parity. In particular, it holds $x\delta \in \{1,n\}$ or $x\in \{1,n\}$.
\end{lemma}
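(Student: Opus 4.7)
The plan is to take any witness $x\in dom\delta$ with $x$ and $x\delta$ of different parity, derive strong restrictions on the rest of $dom\delta$ and $\text{im}\delta$, and then conclude both uniqueness and the endpoint condition. By symmetry I would focus throughout on the case $x$ odd and $x\delta$ even (applying the argument to $\delta^{-1}$ covers the case $x$ even and $x\delta$ odd); here $x$ is a minimum of the fence while $x\delta$ is a maximum.

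The first step uses the partial-automorphism equivalence $a\prec b\Leftrightarrow a\delta\prec b\delta$ to show that $x-1,x+1\notin dom\delta$ (whenever the indices lie in $\overline{n}$) and, dually, $x\delta-1,x\delta+1\notin\text{im}\delta$. Indeed, $x+1\in dom\delta$ would yield $x\delta\prec (x+1)\delta$, forcing the even $x\delta$ to be a minimum, which is absurd; the other three cases are identical. Both of $x\delta\pm 1$ lie in $\overline{n}$ because $x\delta$ is even and $n$ is odd, so two indices are always forbidden in the image.

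Next I would prove $x\in\{1,n\}$ by assuming the opposite. Then $x\pm 1$ are both missing from $dom\delta$ and $x\delta\pm 1$ are both missing from $\text{im}\delta$, so $\rank\delta\geq n-2$ forces $dom\delta=\overline{n}\setminus\{x-1,x+1\}$ and $\text{im}\delta=\overline{n}\setminus\{x\delta-1,x\delta+1\}$. These sets split into three convex intervals of sizes $x-2,\,1,\,n-x-1$ and $x\delta-2,\,1,\,n-x\delta-1$ respectively. Since $\delta$ is an order isomorphism of $dom\delta$ onto $\text{im}\delta$, it must pair these intervals by size, leaving either $x=x\delta$ (impossible by parity) or $x+x\delta=n+1$. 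The second possibility is precisely where the hypothesis that $n$ is odd enters: $x+x\delta$ is odd $+$ even $=$ odd, whereas $n+1$ is even. Hence $x\in\{1,n\}$; the symmetric argument through $\delta^{-1}$ yields $x\delta\in\{1,n\}$ when $x$ is even and $x\delta$ is odd, which is the ``in particular'' claim.

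Finally, uniqueness reduces to a short parity case check on two hypothetical distinct witnesses $x\neq y$. If both jump odd-to-even, then $\{x,y\}=\{1,n\}$ and $\text{im}\delta$ must avoid all four of $1\delta\pm 1,\,n\delta\pm 1$; these four indices are odd and $1\delta\neq n\delta$, so at most one pair can collide, leaving at least three missing elements — contradicting $\rank\delta\geq n-2$. The symmetric even-to-odd case is identical via $\delta^{-1}$. For a mixed pair the forbidden elements in $dom\delta$ (or $\text{im}\delta$) have different parities and are therefore automatically distinct, again forcing three exclusions. The main obstacle throughout is the block-size comparison in the middle step: the whole lemma hinges on the parity of $n$ entering in the right place — for even $n$ the equation $x+x\delta=n+1$ admits odd-plus-even solutions and this structural dichotomy collapses, which is exactly why the odd case requires its own argument.
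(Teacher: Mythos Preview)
Your argument is correct, but it follows a different line than the paper's. After isolating a witness $x$ and concluding that $dom\delta=\overline{n}\setminus\{x-1,x+1\}$ and $\text{im}\delta=\overline{n}\setminus\{x\delta-1,x\delta+1\}$, the paper finishes with a pure parity count: only $x$ can change parity, so the $\frac{n+1}{2}-2$ odd elements of $dom\delta$ would have to account for the $\frac{n+1}{2}-1$ odd elements of $\text{im}\delta$ (in the case $x$ even; the other case is symmetric), which is impossible. You instead exploit the block structure: the three maximal convex pieces of $dom\delta$ must be matched with those of $\text{im}\delta$, forcing $x=x\delta$ or $x+x\delta=n+1$, both ruled out by parity since $n$ is odd. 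Your route makes the role of the fence geometry more explicit and pinpoints cleanly where odd $n$ enters; the paper's count is shorter and avoids having to justify that maximal convex intervals go to maximal convex intervals. That justification is the one place your write-up is a bit compressed: it follows from the comparability equivalence $|a-b|\le 1\Leftrightarrow |a\delta-b\delta|\le 1$, and since $\{x\}$ is sent to $\{x\delta\}$ by definition, the remaining two size pairs must match --- but a single sentence saying this would make the step airtight. Your uniqueness analysis (two odd witnesses, two even witnesses via $\delta^{-1}$, mixed case) is more explicit than the paper's, which compresses uniqueness into ``by the same argumentation.''
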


\begin{proof}
Since $\delta \in Par_{n}$, there is an $x\in dom\delta $ such that $x$
and $x\delta $ have different parity. Assume that $x$ is even and $x\delta \notin \{1,n\}$. Then $x-1,x+1\notin
dom\delta $ and $x\delta -1,x\delta +1\notin \text{im}\delta $. Hence, since $rank\delta \geq n-2$, there is
only one $y\in dom\delta $ such that $y$ and $y\delta $ have different
parity, namely $x$. So, we have $\frac{n+1}{2}-2$ odd elements in the domain
of $\delta $, which have to be mapped to $\frac{n+1}{2}-1$ odd elements in
the image of $\delta $, a contradiction. By the same argumentation, we
obtain that $x$ is unique with $x\in \{1,n\}$, whenever $x\delta $ is even.
\end{proof}

\begin{lemma}\label{lmm4}
Let $\delta \in J_{n}$. Then $\delta \in \left\langle G_{n}\right\rangle $.
\end{lemma}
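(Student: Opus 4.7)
The plan is to partition $J_n$ into its parity-preserving part $J_n \setminus Par_n$ and its parity-changing part $J_n \cap Par_n$ and reduce the latter to the former. For $\delta \in J_n \cap Par_n$, Lemma \ref{lmm(bf4)} gives a unique $x \in \text{dom}\,\delta$ with $x$ and $x\delta$ of different parity, and forces $x \in \{1,n\}$ or $x\delta \in \{1,n\}$, yielding four symmetric subcases. In the representative subcase $x=1$, $x\delta = i$ even, order-preservation on the fence forces $2 \notin \text{dom}\,\delta$ (since $i$ is maximal and $1 \prec 2$), and when additionally $\{i-1,i+1\} \cap \text{im}\,\delta = \emptyset$ the composition $\delta\beta_i^{even}$ lies in $\mathcal{FI}_n \setminus Par_n$ with $\text{rank}\,(\delta\beta_i^{even}) \geq n-2$; the identity $\beta_i^{even}\beta_i^{odd} = id_{\overline{n}\setminus\{i-1,i+1\}}$ then recovers $\delta = (\delta\beta_i^{even})\beta_i^{odd}$, reducing $\delta$ to the parity-preserving case. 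When $\{i-1,i+1\}$ is not the missing image pair, I would first pre-compose with an $\alpha_k$ or $\alpha_{k,\ell}$ to relocate the missing pair before applying the above. The subcase $x\delta = 1$, $x$ even is handled symmetrically by $\delta = (\delta\beta_x^{odd})\beta_x^{even}$; the subcases $x = n$ or $x\delta = n$ use the $\gamma_n$-conjugates $\gamma_n\beta_i^{?}\gamma_n$, which also bring $\beta_i^{?}$ with $i > (n+1)/2$ (absent from $G_n$) into $\langle G_n\rangle$ via conjugation of the small $\beta_j^{?}$ contained in $G_n$.

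\textbf{Parity-preserving case.} For $\delta \in J_n \setminus Par_n$, $\text{rank}\,\delta \in \{n, n-1, n-2\}$. Rank $n$ yields only $id_{\overline{n}}$ and $\gamma_n$, both in $\langle G_n\rangle$ (using $id_{\overline{n}} = \gamma_n^2$). Rank $n-1$ forces $\delta = id_{\overline{n}\setminus\{k\}}$, handled by $\alpha_k$ (odd $k \leq (n+1)/2$), $\gamma_n\alpha_{n+1-k}\gamma_n$ (odd $k > (n+1)/2$), $\alpha_k\alpha_k$ (even $k \leq n-3$), or $\gamma_n\alpha_2\alpha_2\gamma_n$ (for $k = n-1$). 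Rank $n-2$ is the combinatorial heart: I would classify $\delta$ by the two missing positions of $\text{dom}\,\delta$ and $\text{im}\,\delta$ together with the action on convex blocks of $\text{dom}\,\delta$, identifying each shape as (i) a double identity restriction $id_{\overline{n}\setminus\{k,\ell\}}$, built from $\alpha_k\alpha_k\alpha_\ell\alpha_\ell$ or $\alpha_{k,\ell}\alpha_{k,\ell}$, (ii) a single reflection $\alpha_i$ or $\alpha_{i,j}$ from $G_n$ (or a $\gamma_n$-conjugate when the reflected block lies in the right half of $\overline{n}$), or (iii) a translation-type map realised by the product $\alpha_1\alpha_2\gamma_n$ (the shift by $-2$ used in the proof of Proposition \ref{prop3}) together with its $\gamma_n$-conjugates and $\alpha_k$-adjustments. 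The even-indexed $\alpha_{i,j}$ not in $G_n$ are factored through odd-indexed $\alpha_{i',j'} \in G_n$ and single-removal $\alpha_k$'s.

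\textbf{Main obstacle.} The principal obstacle is the rank-$(n-2)$ parity-preserving case together with the ``stretched'' subcase of $Par_n$: enumerating all shape classes, exhibiting an explicit factorisation in $G_n$ for each, and verifying the products by direct composition is lengthy, especially for even-indexed $\alpha_{i,j}$ and shift-type maps straddling the middle of $\overline{n}$. A secondary obstacle is ensuring the parity-cancelling reduction preserves $\text{rank}\,\delta \geq n-2$; this depends on forced exclusions like $2 \notin \text{dom}\,\delta$ when $\delta(1)$ is maximal, and occasionally on an extra preprocessing step with an $\alpha_k$ or $\alpha_{k,\ell}$.
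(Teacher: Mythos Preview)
Your rank-$(n-1)$ case contains a genuine error. You claim ``Rank $n-1$ forces $\delta = id_{\overline{n}\setminus\{k\}}$,'' but this is false when $k$ is even. For even $k$ the set $\overline{n}\setminus\{k\}$ decomposes into two convex blocks $\{1,\ldots,k-1\}$ and $\{k+1,\ldots,n\}$, and a partial automorphism of the fence with this domain may act on each block either as the identity or as an order-reversal, and may additionally swap the two blocks via $\gamma_n$ when their lengths match; this yields eight parity-preserving elements of $R_k$, not one. Concretely, $\alpha_k$ itself (which lies in $G_n$) is a rank-$(n-1)$ parity-preserving map that is \emph{not} a partial identity. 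The paper's proof enumerates all eight forms for even $k\le n-3$ and expresses each as a word in $\gamma_n,\alpha_k,\alpha_{n-k+1}$; your list $\{\alpha_k^2,\gamma_n\alpha_2^2\gamma_n\}$ covers only the partial identities and misses the remaining six. This gap propagates: the rank-$(n-2)$ analysis and the $Par_n$ reduction both rely on having all rank-$(n-1)$ elements already in $\langle G_n\rangle$.

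There is also a structural looseness in your $Par_n$ reduction. You reduce $\delta\in J_n\cap Par_n$ to the parity-preserving case by composing with $\beta_i^{odd},\beta_i^{even}$, but for $i>(n+1)/2$ these are not in $G_n$, and showing they lie in $\langle G_n\rangle$ is itself a $Par_n$ statement. The paper avoids this circularity by proving directly that every $\delta\in J_n\cap Par_n$ is one of four explicit shapes $\beta_a^{odd}$, $\alpha_2\beta_a^{odd}$, $\beta_a^{even}$, $\beta_a^{even}\alpha_2$, and then giving the conjugation formulas $\beta_a^{odd}=\alpha_2\beta_{n-a+1}^{odd}\gamma_n$, $\beta_a^{even}=\gamma_n\beta_{n-a+1}^{even}\alpha_2$ for $a>(n+1)/2$. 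Your reduction can be repaired by first establishing these identities, but as written it assumes what it needs.
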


\begin{proof}
If $\text{rank}\delta =n$ then $\delta =\gamma _{n}\in G_{n}$ or $\delta =id_{\overline{n}}=\gamma
_{n}\gamma _{n}$.

Let now $\text{rank}\delta =n-1$. Then there is $i\in \overline{n}\setminus
dom\delta $. We observe that $im\delta =dom\delta $ or $im\delta =(dom\delta
)\gamma _{n}$.

If $i\leq n-3$ is even then $\delta =\alpha _{i}\in G_{n}$ or $\delta
=\alpha _{i}^{2}$ or $\delta =\alpha _{i}\gamma _{n}$ or $\delta =\alpha
_{i}^{2}\gamma _{n}$ or

$\delta =\left(
\begin{array}{ccccccc}
1 & \cdots  & i-1 & i & i+1 & \cdots  & n \\
i-1 & \cdots  & 1 &  & i+1 & \cdots  & n%
\end{array}%
\right) =\gamma _{n}\alpha _{n-i+1}\gamma _{n}$ or $\delta =\gamma
_{n}\alpha _{n-i+1}$ or

$\delta =\left(
\begin{array}{ccccccc}
1 & \cdots  & i-1 & i & i+1 & \cdots  & n \\
i-1 & \cdots  & 1 &  & n & \cdots  & i+1%
\end{array}%
\right) =\gamma _{n}\alpha _{n-i+1}\gamma _{n}\alpha _{i}$ or $\delta
=\gamma _{n}\alpha _{n-i+1}\gamma _{n}\alpha _{i}\gamma _{n}$.

If $i=n-1$ \ then $\delta =\gamma _{n}\alpha _{_{2}}$ or $\delta =\gamma _{n}\alpha _{2}\gamma _{n}$ or $\delta
=\gamma _{n}\alpha _{2}^{2}$ or $\delta =\gamma _{n}\alpha _{2}^{2}\gamma
_{n}$.

If $i$ is odd and $i\leq \frac{n+1}{2}$ then $\delta =\alpha _{i}\in G_{n}$
or $\delta =\alpha _{i}\gamma _{n}$ and if $i>\frac{n+1}{2}$ then $\delta=\gamma
_{n}\alpha _{n-i+1}$ or $\delta=\gamma _{n}\alpha _{n-i+1}\gamma _{n}$, where $-i<
-\frac{n+1}{2}$, i.e. $n-i+1<\frac{n+1}{2}$.

Now, we consider the case $\text{rank}\delta =n-2$ and $\delta \notin Par_{n}$. Then
there are $i,j\in \overline{n}\setminus dom\delta $ with $i<j$. Let both $i$
and $j$ be odd. If $4\leq j-i<n-1$ and $i\leq n-j+1$ then $\delta =$ $%
\alpha _{i,j}\in G_{n}$ or $\delta =\alpha _{i,j}\gamma _{n}$. Suppose now that $j-i=n-1$, i.e. $i=1$ and $j=n$.
Here, we obtain $\delta =\alpha _{1,n}=\gamma _{n}\alpha _{1}\alpha _{n}$ or $\delta =\alpha _{1,n}^{2}=\alpha _{1}\alpha _{n}$.
If $2=j-i<n-1$, i.e. $i+2=j$ then $\delta =\left(
\begin{array}{ccccccccc}
1 & \cdots  & i-1 & i & i+1 & i+2 & i+3 & \cdots  & n \\
1 & \cdots  & i-1 &  & i+1 &  & i+3 & \cdots  & n%
\end{array}%
\right) =\alpha _{i}\alpha _{i+2}$ or $\delta =\left(
\begin{array}{ccccccccc}
1 & \cdots  & i-1 & i & i+1 & i+2 & i+3 & \cdots  & n \\
n & \cdots  & n-i+2 &  & n-i &  & n-i-2 & \cdots  & 1%
\end{array}%
\right) =\alpha _{i}\alpha _{i+2}\gamma _{n}$. Suppose now $i>n-j+1$. Then
we put $r:=n-j+1$ and $s:=n-i+1$. We observe that $%
r=n-j+1<i=n-(n-i+1)+1=n-s+1$ and $\delta =\gamma _{n}\alpha _{r,s\text{.}%
}\gamma _{n}$.
Admit now that $i$ is even or $j$ is even. The following is not
hard to verify. If $i$ is even and $j$ is odd then $\delta \in \{\alpha
_{i},\alpha _{i}^{2},\gamma _{n}\alpha _{n-i+1}\gamma _{n},\alpha _{i}\gamma
_{n},\alpha _{i}^{2}\gamma _{n},\gamma _{n}\alpha _{n-i+1},\gamma _{n}\alpha
_{n-i+1}\gamma _{n}\alpha _{i}, \gamma _{n}\alpha _{n-i+1}\gamma _{n}\alpha _{i}\gamma _{n}\}\{\alpha _{j},\alpha _{j}\gamma _{n}\}$. If $%
i$ is odd and $j$ is even then we obtain $\delta \in \left\langle
G_{n}\right\rangle $, dually. If both $i$ and $j$ are even then we have $%
\alpha _{i,j}=\alpha _{i}\alpha _{j-i}\alpha _{i}$ and $\delta \in \Gamma
_{1}\Gamma _{2}\{\gamma _{n},id_{\overline{n%
}}\}$, where $\Gamma _{1}=\{id_{\overline{n}},\alpha
_{j}\}\{id_{\overline{n}},\alpha _{i,j}\}\{id_{\overline{n}},\gamma
_{n}\alpha _{n-i+1}\gamma _{n}\}$ and $\Gamma _{2}=\{\alpha
_{i}^{2}\alpha _{j}^{2},\alpha _{i}\alpha _{j}^{2},\alpha _{i}^{2},\gamma
_{n}\alpha _{n-j+1}\gamma _{n}\}$.

Now, we consider the case $\text{rank}\delta =n-2$ and $\delta \in Par_{n}$. By Lemma \ref{lmm(bf4)},
there is exactly one $x\in dom\delta $ such that $x$ and $x\delta $ have different
parity and either $x$ is even and $x\delta \in \{1,n\}$ or $x\in \{1,n\}$.
Hence, there is an
even $a\in \overline{n}$ such that $\delta =\beta _{a}^{odd}$ or $\delta
=\left(
\begin{array}{ccccccccc}
1 & 2 & 3 & \cdots  & n-a+1 & n-a+2 & n-a+3 & \cdots  & n \\
a &  & n & \cdots  & a+2 &  & a-2 & \cdots  & 1%
\end{array}%
\right) =\alpha _{2}\beta _{a}^{odd}$ or $\delta =\beta _{a}^{even}$ or $%
\delta =\left(
\begin{array}{ccccccccc}
1 & \cdots  & a-2 & a-1 & a & a+1 & a+2 & \cdots  & n \\
n &  & n-a+3 &  & 1 &  & n-a+1 & \cdots  & 3%
\end{array}%
\right) =\beta _{a}^{even}\alpha _{2}$. Suppose that $a>\frac{n+1}{2}$. Then
we have $\beta _{a}^{odd}=\alpha _{2}\beta _{n-a+1}^{odd}\gamma _{n}$ and $%
\beta _{a}^{even}=\gamma _{n}\beta _{n-a+1}^{even}\alpha _{2}$, where $%
n-a+1<n-(\frac{n+1}{2})+1=\frac{n+1}{2}$.
\end{proof}

Lemma \ref{lmm4} shows $\left\langle J_{n}\right\rangle =\left\langle
G_{n}\right\rangle $. Thus, Proposition \ref{prop3} provides that $G_{n}$ is a
generating set for $\mathcal{FI}_{n}$.

\begin{corollary}\label{cor5}
$\mathcal{FI}_{n}=\left\langle G_{n}\right\rangle $.
\end{corollary}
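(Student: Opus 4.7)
The plan is to obtain the corollary as an immediate consequence of Proposition \ref{prop3} and Lemma \ref{lmm4}; no new transformation-level calculation is needed. First I would record the elementary inclusion $G_{n}\subseteq J_{n}$: inspecting the list defining $G_{n}$, the element $\gamma_{n}$ has rank $n$, each $\alpha_{i}$ has rank $n-1$, and each $\alpha_{i,j}$, $\beta_{i}^{odd}$, $\beta_{i}^{even}$ has rank $n-2$, so every generator in $G_{n}$ lies in $J_{n}=\{\alpha\in\mathcal{FI}_{n}:\mathrm{rank}\,\alpha\geq n-2\}$. This yields $\langle G_{n}\rangle\subseteq\langle J_{n}\rangle$.

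For the reverse inclusion I would invoke Lemma \ref{lmm4}, which asserts $\delta\in\langle G_{n}\rangle$ for every $\delta\in J_{n}$. Hence $J_{n}\subseteq\langle G_{n}\rangle$, and so $\langle J_{n}\rangle\subseteq\langle G_{n}\rangle$. Combining the two inclusions gives $\langle J_{n}\rangle=\langle G_{n}\rangle$. Applying Proposition \ref{prop3}, which says $\mathcal{FI}_{n}=\langle J_{n}\rangle$, we conclude $\mathcal{FI}_{n}=\langle G_{n}\rangle$, as desired.

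There is essentially no obstacle to this argument, since the entire technical content resides in the proposition (reducing an arbitrary element of $\mathcal{FI}_{n}$ to a product of rank-$(\geq n-2)$ elements) and in the lemma (expressing every rank-$(\geq n-2)$ element as a product of members of the explicit set $G_{n}$). The only point that even requires a glance is the rank check $G_{n}\subseteq J_{n}$, which is immediate from the definitions.
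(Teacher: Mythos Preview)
Your argument is correct and coincides with the paper's own justification: the paper simply notes that Lemma~\ref{lmm4} gives $\langle J_n\rangle=\langle G_n\rangle$ (the inclusion $G_n\subseteq J_n$ being immediate from the definitions) and then invokes Proposition~\ref{prop3} to conclude $\mathcal{FI}_n=\langle G_n\rangle$. Your write-up merely spells out the trivial rank check $G_n\subseteq J_n$ a bit more explicitly, but the approach is the same.
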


It remains to show that $G_{n}$ is a generating set of minimal size.
It is useful to classify the partial injections in $\mathcal{FI}_{n}$
with \text{rank} $n-1$. For $1\leq i\leq \frac{n+1}{2}$, let $R_{i}:=\{\alpha \in
\mathcal{FI}_{n}:dom\alpha =\overline{n}\setminus \{i\}$ or $dom\alpha =%
\overline{n}\setminus \{n-i+1\}\}$. Clearly, $\bigcup \{R_{i}:1\leq i\leq
\frac{n+1}{2}\}=\{\alpha \in \mathcal{FI}_{n}:\text{rank}\alpha =n-1\}$. Moreover,
any generating set of $\mathcal{FI}_{n}$ contains elements form each $R_{i}$,
$1\leq i\leq \frac{n+1}{2}$, as the following lemma will show.

\begin{lemma}\label{lmm6}
Let $G\subseteq \mathcal{FI}_{n}$ with $\mathcal{FI}_{n}=\left\langle
G\right\rangle $. Then $G\cap R_{i}\neq \emptyset $ for all $i\in
\{1,2,\ldots ,\frac{n+1}{2}\}$.\qquad
\end{lemma}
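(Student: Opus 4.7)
The plan is to fix $i\in\{1,\ldots,\frac{n+1}{2}\}$, exhibit a specific witness $\delta\in R_{i}$ of rank $n-1$, express $\delta=g_{1}g_{2}\cdots g_{k}$ as a product of elements of $G$, and show that some factor $g_{j}$ must itself lie in $R_{i}$. As a witness I would take $\delta:=a_{i}$ when $i$ is odd and $\delta:=\alpha_{i}$ when $i$ is even; in both cases $dom\,\delta=\overline{n}\setminus\{i\}$ and $\text{rank}\,\delta=n-1$, so $\delta\in R_{i}$.

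The key observation is the standard containment $dom(g_{1}g_{2}\cdots g_{k})\subseteq dom\,g_{1}$, which forces $\text{rank}\,g_{1}\geq\text{rank}\,\delta=n-1$. If $\text{rank}\,g_{1}=n-1$, then $dom\,g_{1}$ has exactly $n-1$ elements and contains $dom\,\delta$, so $dom\,g_{1}=dom\,\delta=\overline{n}\setminus\{i\}$, and hence $g_{1}\in R_{i}$, finishing the proof. Otherwise $\text{rank}\,g_{1}=n$, so $g_{1}$ is a total order-preserving bijection of the fence and therefore equals $id_{\overline{n}}$ or $\gamma_{n}$ (the only two automorphisms of $(\overline{n},\preceq)$, as noted in the introduction). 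In that case I multiply on the left by $g_{1}^{-1}\in\{id_{\overline{n}},\gamma_{n}\}$ to obtain $\delta':=g_{1}^{-1}\delta=g_{2}\cdots g_{k}$; since $g_{1}$ is a bijection, $\text{rank}\,\delta'=n-1$ and $dom\,\delta'=g_{1}(dom\,\delta)$ equals either $\overline{n}\setminus\{i\}$ or $\overline{n}\setminus\{n-i+1\}$, so $\delta'$ again lies in $R_{i}$.

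Iterating the reduction, I strip off leading $id_{\overline{n}}$- and $\gamma_{n}$-factors one at a time, staying inside $R_{i}$ throughout. This cannot continue indefinitely, because if every $g_{j}$ belonged to $\{id_{\overline{n}},\gamma_{n}\}$ then $\delta$ would itself be a total bijection of rank $n$, contradicting $\text{rank}\,\delta=n-1$. So the process terminates at some index $l$ with $\text{rank}\,g_{l}=n-1$, and the first case yields $g_{l}\in R_{i}\cap G$. The only subtle point worth flagging is the bookkeeping in the inductive step: the fact that $\gamma_{n}$ swaps $i\leftrightarrow n-i+1$ is exactly what is needed, and it is precisely why the definition of $R_{i}$ was made symmetric in $i$ and $n-i+1$.
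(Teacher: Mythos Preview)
Your argument is correct and follows essentially the same route as the paper's proof. The only cosmetic difference is that the paper first passes to a reduced factorization (no $id_{\overline n}$ factors and no consecutive pair $g_kg_{k+1}=id_{\overline n}$), which forces the process to terminate after at most two steps, whereas you iterate the stripping of leading $id_{\overline n}$/$\gamma_n$ factors and appeal to finiteness; the underlying idea---domain containment plus the $i\leftrightarrow n-i+1$ symmetry built into $R_i$---is identical.
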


\begin{proof}
Assume there is an $i\in \{1,2,\ldots ,\frac{n+1}{2}\}$ with $G\cap
R_{i}=\emptyset $. Then $\alpha _{i}\notin G$ and there are $g_{1},\ldots
,g_{s}\in G\setminus \{id_{\bar{n}}\}$ such that $\alpha _{i}=g_{1}\cdots g_{s}$,
where $g_{k}g_{k+1}\neq id_{\bar{n}}$ for $1\leq k<s$. This implies $\text{rank}g_{1}=n$,
i.e. $g_{1}=\gamma _{n}$, or $g_{1}\in R_{i}$. The latter one is not
possible. But $g_{1}=\gamma _{n}$ provides $\text{rank}g_{2}=n$, i.e. $g_{2}=\gamma
_{n}$, and thus $g_{1}g_{2}=id_{\bar{n}}$,\ or $g_{2}\in R_{i}$. Both are not
possible.
\end{proof}

Now, we are able to state the minimal size of a generating set of $\mathcal{FI%
}_{n}$. It will coincide with the size of $G_{n}$, which gives us the \text{rank}
of $\mathcal{FI}_{n}$.

\begin{proposition}\label{prop7}
Let $G$ be a generating set for $\mathcal{FI}_{n}$. Then $\left\vert G\right\vert \geq 5$ if $n=3$ and $\left\vert
G\right\vert \geq \frac{n-5}{2}+\left\lfloor \frac{n+6}{4}\right\rfloor
\left\lfloor \frac{n+7}{4}\right\rfloor $ if $n\geq 5$.
\end{proposition}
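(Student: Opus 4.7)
My plan is to bound $|G|$ from below by isolating three classes of generators $G$ is forced to contain, mirroring the partition of $G_n$ by rank.

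First, the rank-$n$ contribution. The only rank-$n$ elements of $\mathcal{FI}_n$ are $id_{\bar n}$ and $\gamma_n$, and a product of partial injections has rank bounded above by the minimum rank of any factor. Consequently $\gamma_n$ can be produced only if $\gamma_n \in G$, giving one forced generator. Second, Lemma \ref{lmm6} immediately yields $|G \cap R_i| \ge 1$ for each $i \in \{1,\dots,(n+1)/2\}$, and these sets are pairwise disjoint (they are distinguished by the missing point of the domain), so we have at least $(n+1)/2$ forced rank-$n-1$ generators. Together this already accounts for $(n+3)/2$ elements. For $n=3$ the remaining two generators needed to reach the bound $5$ come from a parity argument using Lemmas \ref{lmm1} and \ref{lmm(bf4)}: both $\beta_2^{odd}$ and $\beta_2^{even}$ lie in $Par_n$ and any factorization producing either of them must recruit a distinct parity-changing generator that cannot be supplied by $\gamma_3$ or an element of $R_1\cup R_2$ alone.

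For $n \ge 5$, the bulk of the bound is the quadratic term $\lfloor (n+6)/4 \rfloor \lfloor (n+7)/4 \rfloor$, which comes from rank-$n-2$ generators. My plan is to index a family $\mathcal{F}$ of rank-$n-2$ targets by the pair of missing domain points $(i,j)$ together with parity/orientation data, chosen to mirror the $\alpha_{i,j}$- and $\beta_i^\ast$-type elements of $G_n$. For each target $\delta \in \mathcal{F}$ I would refine the factorization argument of Lemma \ref{lmm6}: writing $\delta = g_1 \cdots g_s$ over $G$ with no internal collapses, one forces the first non-identity factor $g_1$ to lie in a specific prescribed subclass of $\mathcal{FI}_n$ determined by $(i,j)$ and the parity data. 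Distinct targets force disjoint subclass requirements, and the admissible $(i,j)$-pairs with both coordinates odd, $4 \le j-i < n-1$, and $i \le n-j+1$ (the region trimmed down to a fundamental domain for the reflection $\gamma_n$) combined with the parity-changing $\beta_i^\ast$-type contributions produce exactly the claimed expression, the linear term $(n-5)/2$ absorbing the $\beta$-type parity-changers not already covered by the $R_i$-requirement.

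The main obstacle is the signature analysis for rank $n-2$: showing rigorously that distinct targets in $\mathcal{F}$ enforce distinct membership requirements on $G$. This requires tracking domain and image sets along a factorization chain using the convexity fact preceding Lemma \ref{lmm2}, exploiting the parity separation of Lemma \ref{lmm(bf4)} to isolate the $\beta$-contributions from the $\alpha_{i,j}$-contributions, and using the symmetry $\gamma_n$ to explain the constraint $i \le n-j+1$ (the $\gamma_n$-action identifies $\alpha_{i,j}$ with $\alpha_{n-j+1,n-i+1}$, so only a fundamental domain need be counted). Once these signatures are established the final count is purely combinatorial: summing one per $R_i$, one for $\gamma_n$, and one per admissible $(i,j)$-pair plus the extra parity-changers gives precisely the stated lower bound.
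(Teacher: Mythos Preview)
Your decomposition into rank classes is right in spirit, but there is a genuine arithmetic gap. From Lemma \ref{lmm6} you extract only $|G\cap R_i|\ge 1$ for each $i\le\frac{n+1}{2}$, giving $\frac{n+1}{2}$ rank-$(n-1)$ generators. The paper's count is strictly larger: for each even $i$ with $4\le i\le \frac{n-1}{2}$ one has $|R_i|=16$, yet for any single $\alpha\in R_i$ the set $\langle \alpha,\mathcal{FI}_n\setminus R_i\rangle\cap R_i$ has at most $8$ elements (it lies inside $\{\gamma_n,\mathrm{id}\}\{\alpha_i,\alpha_i^2\}\{\gamma_n,\mathrm{id}\}$), so $|G\cap R_i|\ge 2$ is forced. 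This yields $\lceil n/4\rceil+\frac{n-3}{2}$ rank-$(n-1)$ generators rather than $\frac{n+1}{2}$. If you now add up your contributions --- $1$ for $\gamma_n$, $\frac{n+1}{2}$ for the $R_i$, the $\lfloor n/4\rfloor\lfloor (n+2)/4\rfloor-1$ admissible $\alpha_{i,j}$'s, and the $2\lfloor(n+1)/4\rfloor$ parity-changers --- you fall short of the claimed bound by $\lceil n/4\rceil-2$, which is positive for every $n\ge 9$. So your outline cannot reach the stated inequality without the ``two generators per $R_i$'' argument.

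Separately, the rank-$(n-2)$ part of your proposal is a plan rather than a proof: you assert that distinct targets in $\mathcal F$ force membership in disjoint subclasses of $G$, but this is exactly the hard step. In the paper this is done by two separate and rather delicate arguments. For the $\beta$-type targets one fixes an even $j\le\frac{n+1}{2}$ and shows that if no $\alpha\in G$ sends $\{1,n\}$ into $\{j,n-j+1\}$ (resp.\ $\{j,n-j+1\}$ into $\{1,n\}$), then in any factorization $\beta_j^{odd}=g_1\cdots g_s$ (resp.\ $\beta_j^{even}=\delta_1\cdots\delta_t$) the image (resp.\ domain) comes out wrong; this forces two distinct parity-changers per such $j$. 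For the $\alpha_{i,j}$-type targets one must argue that the first rank-$(n-2)$ factor in a product for $\alpha_{i,j}$ already has domain $\overline n\setminus\{i,j\}$ or its $\gamma_n$-reflection, using that rank-$(n-1)$ factors with an odd missing point are order-preserving or order-reversing. Neither of these arguments is present in your sketch.
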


\begin{proof}
Since $\gamma _{n}$ and $id_{n}=\gamma _{n}\gamma _{n}$ are the only
transformations in $\mathcal{FI}_{n}$ with \text{rank} $n$, we can conclude that we
have at least one transformation with \text{rank} $n$ in $G$, namely $\gamma _{n}$.

Lemma \ref{lmm6} provides that there are at least $\left\lceil \frac{n}{4}%
\right\rceil $ transformations in $G\cap \bigcup \{R_{i}:1\leq i$ $\leq
\frac{n+1}{2}$, $i$ is odd$\}$, where $\left\lceil \frac{n}{4}\right\rceil $
$=\left\vert \{m\in \{1,\ldots ,\frac{n+1}{2}\}:m\text{ is odd}\}\right\vert
$. Moreover, there is at least one element in $G\cap R_{2}$, by Lemma \ref{lmm6}. If $%
n=3$ then we have at least $\left\lceil \frac{n}{4}\right\rceil +1=2$
elements in $G$ with \text{rank} $n-1$. Suppose that $n\geq 5$. If $\frac{n+1}{2}$ is
even then there is at least one element in $G\cap R_{\frac{n+1}{2}}$, by
Lemma \ref{lmm6}, too. If $n=5$ then we have at least $\left\lceil \frac{n}{4}%
\right\rceil +1=\left\lceil \frac{n}{4}\right\rceil +\frac{n-3}{2}$ elements
in $G$ with \text{rank} $n-1$. If $n=7$ then we have at least $\left\lceil \frac{n}{%
4}\right\rceil +1+1=\left\lceil \frac{n}{4}\right\rceil +\frac{n-3}{2}$
elements in $G$ with \text{rank} $n-1$. Let now $n\geq 9$. We consider the case
that $i\in \{4,\ldots ,\frac{n-1}{2}\}$ is even. Assume that $\left\vert
G\cap R_{i}\right\vert \leq 1$. It is easy to verify that $\left\vert
R_{i}\right\vert =16$, but on the other hand, for any $\alpha \in R_{i}$, we
have $\left\vert \left\langle \alpha ,\mathcal{FI}_{n}\setminus
R_{i}\right\rangle \cap R_{i}\right\vert \leq 8$, because $\left\langle
\alpha ,\mathcal{FI}_{n}\setminus R_{i}\right\rangle \cap R_{i}\subseteq
\{\alpha _{i},\alpha _{i}^{2},\alpha _{i}\gamma _{n},\alpha _{i}^{2}\gamma
_{n},\gamma _{n}\alpha _{i},\gamma _{n}\alpha _{i}^{2},\gamma _{n}\alpha
_{i}\gamma _{n},\gamma _{n}\alpha _{i}^{2}\gamma _{n}\}$. This implies that $%
R_{i}\nsubseteqq \left\langle G\right\rangle $, a contradiction. If $\frac{%
n+1}{2}$ is even then $\frac{n-7}{4}=\left\vert \{m\in \{4,\ldots ,\frac{n-1%
}{2}\}:m\text{ is even}\}\right\vert $, i.e. we have at least $1+1+2\left(
\frac{n-7}{4}\right) =\frac{n-3}{2}$ elements in $G\cap \bigcup
\{R_{i}:1\leq i$ $\leq \frac{n+1}{2}$, $i$ is even$\}$. If $\frac{n+1}{2}$
is odd then $\frac{n-5}{4}=\left\vert \{m\in \{4,\ldots ,\frac{n-1}{2}\}:m%
\text{ is even}\}\right\vert $, i.e. we have at least $1+2\left( \frac{n-5}{4%
}\right) =\frac{n-3}{2}$ elements in $G\cap \bigcup \{R_{i}:1\leq i$ $\leq
\frac{n+1}{2}$, $i$ is even$\}$. Altogether, we have at least $\left\lceil
\frac{n}{4}\right\rceil +\frac{n-3}{2}$ elements in $G$ with \text{rank} $n-1$,
whenever $n\geq 5$.

Assume that $\left\vert G\cap Par_{n}\right\vert <2\left\lfloor \frac{n+1}{4}%
\right\rfloor $, where $\left\lfloor \frac{n+1}{4}\right\rfloor =\left\vert
\left\{ m\in \{2,\ldots ,\frac{n+1}{2}\}:m\text{ is even}\right\}
\right\vert $. Note, there exists exactly one $x\in dom\alpha $ with $x$ and
x$\alpha $ have different parity, whenever $\alpha \in J_{n}\cap Par_{n}$, by
Lemma \ref{lmm(bf4)}. This
provides that there is an even $j\in \{2,\ldots ,\frac{n+1}{2}\}$ such that $%
j\alpha ,(n-j+1)\alpha \notin \{1,n\}$ for all $\alpha \in G$ or $1\alpha
,n\alpha \notin \{j,n-j+1\}$ for all $\alpha \in G$.

Suppose that $1\alpha ,n\alpha \notin \{j,n-j+1\}$ for all $\alpha \in G$.
In particular, this implies that $\beta _{j}^{odd}\notin G$. Hence, there
are $g_{1},\ldots ,g_{s}\in G\setminus \{id_{\bar{n}}\}$ such that $\beta
_{j}^{odd}=g_{1}\cdots g_{s}$. Since $1\beta _{j}^{odd}$ and $1$ have
different parity, we conclude that there is $k\in \{1,\ldots ,s\}$ such that
$g_{k}\in Par_{n}$. Without loss of generality, we can assume that $%
g_{r}\notin Par_{n}$ for $k<r\leq s$. By Lemma \ref{lmm(bf4)}, then there is an even $m\in \overline{n%
}$ such that $1g_{k}=m$ or $ng_{k}=m$. Since $1g_{k},ng_{k}\notin \{j,n-j+1\}
$, we conclude that $m\notin \{j,n-j+1\}$. Note that $\text{im}\beta _{j}^{odd}=%
\overline{n}\setminus \{j-1,j+1\}$. If $k=s$ then $\text{im}\beta
_{j}^{odd}=\text{im}(g_{1}\cdots g_{s})=\text{im}g_{k}=\overline{n}\setminus \{m-1,m+1\}\neq
\overline{n}\setminus \{j-1,j+1\}$, a contradiction. If $k<s$ then we put
\begin{equation*}
g:=g_{k+1}\cdots g_{s}\text{.}
\end{equation*}%
If $g=id_{n}$ then we get a contradiction by the previous arguments. If $%
g=\gamma _{n}$ then $\text{im}\beta _{j}^{odd}=\text{im}(g_{1}\cdots
g_{k}g)=\{n-m,n-m+2\}\neq \overline{n}\setminus \{j-1,j+1\}$ since $m\neq
n-j+1$, a contradiction. If $\text{rank}g=n-1$ then $\text{im}g=\overline{n}\setminus
\{j-1\}$ or $\text{im}g=\overline{n}\setminus \{j+1\}$. First, we consider the case $%
\text{im}g=\overline{n}\setminus \{j-1\}$. Since $j-1$ is odd, we can conclude that
$g\in \{\alpha _{j-1},\gamma _{n}\alpha _{j-1}\}$. Suppose that $g=\alpha
_{j-1}$. Since $\text{rank}g_{1}\cdots g_{k}g=\text{rank}g_{k}=n-2$, we have $%
\text{im}g_{k}\subseteq dom g$. This implies $j-1\in \{m-1,m+1\}$. Because of $j\neq
m$, we have $j-1=m+1$. This provides $m-1=j-3$, i.e. $j-3\notin
\text{im}g_{k}=\text{im}(g_{1}\cdots g_{k})=\text{im}(g_{1}\cdots g_{k}\alpha _{j-1})=\text{im}(g_{1}\cdots
g_{k}g)$, a contradiction. Suppose that $g=\gamma _{n}\alpha _{j-1}$.
Clearly, $dom g=\overline{n}\setminus \{n-j+2\}$. Then $\text{im}g_{k}\subseteq dom g$
provides $n-j+2\in \{m-1,m+1\}$. But $n-j+1\neq m$ gives $n-j+2=m-1$. Hence,
$m+1=n-j+4$ and we obtain $j-3=(n-j+4)\gamma _{n}\alpha _{j-1}$. Thus $%
j-3\notin \text{im}(g_{1}\cdots g_{k}g)$, a contradiction. Dually, we can treat the
case $\text{im}g=\overline{n}\setminus \{j+1\}$. If $\text{rank}g=n-2$ then $\text{im}g=\overline{%
n}\setminus \{j-1,j+1\}$. Since both $j-1$ and $j+1$ are odd, we can conclude
that $domg=\overline{n}\setminus \{j-1,j+1\}$ or $domg=\overline{n}\setminus
\{n-j,n-j+2\}$, since $g=\alpha _{j-1}\alpha _{j+1}$ or $g=\gamma _{n}\alpha
_{j-1}\alpha _{j+1}$. But because of $m\neq n-j+1$ and $m\neq j$, we have $m-1\neq n-j$
and $m-1\neq j-1$, respectively. This implies $img_{k}=\{m-1,m+1\}\neq
domg$, a contradiction.

Suppose that $j\alpha ,(n-j+1)\alpha \notin \{1,n\}$ for all $\alpha \in G$.
Then we conclude dually that there are $\delta _{1},\ldots ,\delta _{t}\in
G\setminus \{id_{\bar{n}}\}$ such that $\beta _{j}^{even}=\delta _{1}\cdots \delta
_{t}$ but $dom\delta _{1}\cdots \delta _{t}\neq dom$ $\beta _{j}^{even}$,
i.e. we obtain a contradiction, too.

Suppose that $n\geq 5$. By straightforward combinatorial calculations, one
obtains that there are exactly $\left\lfloor \frac{n}{4}\right\rfloor
\left\lfloor \frac{n+2}{4}\right\rfloor -1$ pairs $(i,j)$ of odd numbers $%
i,j\in \overline{n}$ with $4\leq j-i<n-1$ and $i\leq n-j+1$. Assume that
there are less than $\left\lfloor \frac{n}{4}\right\rfloor \left\lfloor
\frac{n+2}{4}\right\rfloor -1$ elements in $G\setminus Par_{n}$ with \text{rank} $%
n-2$. Note that for odd numbers $i,j\in \overline{n}$ with $4\leq j-i<n-1$
and $i\leq n-j+1$, it holds $4\leq (n-i+1)-(n-j+1)<n-1$ but $%
(n-j+1)>n-(n-i+1)+1$, whenever $i\neq n-j+1$. In particular, $i=n-j+1$
implies $j=n-i+1$. This justifies the existence of a pair $(i,j)$ of odd
numbers $i,j\in \overline{n}$ with $4\leq j-i<n-1$ and $i\leq n-j+1$ such
that $dom\alpha \neq \overline{n}\setminus \{i,j\}$ and $dom\alpha \neq
\overline{n}\setminus \{n-i+1,n-j+1\}$ for all $\alpha \in G$. In particular, $%
\alpha _{i,j}\notin G$ and there are $h_{1},\ldots ,h_{u}\in G\setminus
\{id_{\bar{n}}\}$ such that $\alpha _{i,j}=h_{1}\cdots h_{u}$. Note that each $%
\alpha \in \mathcal{FI}_{n}$ with $dom\alpha =\overline{n}\setminus
\{\upsilon \}$ for some odd $\upsilon \in \overline{n}$ is either
order-preserving or order-reversing. Assume that $rankh_{q}\geq n-1$ for all $q\in \{1,\ldots ,u\}$.
Since $\alpha _{i,j}$ is neither order-preserving nor order-reversing, there
is an even $v\in \overline{n}$ with $v\notin dom(h_{1}\cdots h_{u})$, a
contradiction. This implies the existence of a $k\in
\{1,\ldots ,u\}$ with $\text{rank}h_{k}=n-2$. Without loss of generality, we can assume that $rankh_{q}\geq
n-1 $ for $1\leq q<k$. In particular, an odd number is missing in $domh_{q}$, for $q\in
\{1,\ldots ,k\}$. Let $domh_{k}=\{s,t\}$. Since $%
h_{k}\in G$, we have $\{s,t\}\neq \{i,j\}$ as well as $\{s,t\}\neq
\{n-i+1,n-j+1\}$, i.e. $\{n-s+1,n-t+1\}\neq \{i,j\}$. Clearly, $%
dom h_{q}\subseteq \text{im}h_{q-1}$ for $1\leq q\leq k$. Therefore and since $h_{q}$ is
order-preserving or order-reversing for $1\leq q<k$, there is $h\in
\{id_{n},\gamma _{n}\}$ such that $h_{1}\cdots h_{k-1}h_{k}=hh_{k}$. Hence, $%
dom(h_{1}\cdots h_{k})=\{s,t\}$ or $dom(h_{1}\cdots h_{k})=\{n-s+1,n-t+1\}$.
Since $\text{rank}(h_{1}\cdots h_{k})=2=\text{rank}(h_{1}\cdots h_{u})$, we can conclude
that $dom\alpha _{i,j}=dom(h_{1}\cdots h_{u})=dom(h_{1}\cdots h_{k})\neq
\{i,j\}$, a contradiction.

Altogether, we have shown that $\left\vert G\right\vert \geq 1+2+2\left\lfloor \frac{3+1}{4}\right\rfloor =5$ if $n=3$ and
$\left\vert G\right\vert \geq 1+\left\lceil \frac{n}{4}\right\rceil +\frac{%
n-3}{2}+2\left\lfloor \frac{n+1}{4}\right\rfloor +\left\lfloor \frac{n}{4}%
\right\rfloor \left\lfloor \frac{n+2}{4}\right\rfloor -1$, whenever $n\geq 5$.
It is easy to verify that $\left\lceil \frac{n}{4}\right\rceil
+2\left\lfloor \frac{n+1}{4}\right\rfloor =n-\left\lfloor \frac{n}{4}%
\right\rfloor$ and $n-\left\lfloor \frac{n}{4}\right\rfloor +\left\lfloor
\frac{n}{4}\right\rfloor \left\lfloor \frac{n+2}{4}\right\rfloor
=\left\lfloor \frac{n+6}{4}\right\rfloor \left\lfloor \frac{n+7}{4}%
\right\rfloor -1$. Hence, $\left\vert G\right\vert
\geq \left\lfloor \frac{n+6}{4}\right\rfloor
\left\lfloor \frac{n+7}{4}\right\rfloor -1+\frac{n-3}{2}=\frac{n-5}{2}%
+\left\lfloor \frac{n+6}{4}\right\rfloor \left\lfloor \frac{n+7}{4}%
\right\rfloor $, whenever $n\geq 5$.
\end{proof}

It is easy to calculate that $\left\vert G_n\right\vert =1+\left\lceil \frac{n%
}{4}\right\rceil +\frac{n-3}{2}+2\left\lfloor \frac{n+1}{4}\right\rfloor
+\left\lfloor \frac{n}{4}\right\rfloor \left\lfloor \frac{n+2}{4}%
\right\rfloor -1=\frac{n-5}{2}+\left\lfloor \frac{n+6}{4}\right\rfloor
\left\lfloor \frac{n+7}{4}\right\rfloor $, whenever $n\geq 5$, and $%
\left\vert G_3\right\vert =5$. So, we get the following corollary.

\begin{corollary}\label{cor8}
$\text{rank}\mathcal{FI}_{3}=5$ and $\text{rank}\mathcal{FI}_{n}=\frac{n-5}{2}+\left\lfloor \frac{n+6%
}{4}\right\rfloor \left\lfloor \frac{n+7}{4}\right\rfloor $, whenever $n\geq
5$.
\end{corollary}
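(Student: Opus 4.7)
The plan is to assemble the corollary from the three results already proved and verify the cardinality of the explicit generating set $G_n$ matches the lower bound.

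First, I would invoke Corollary \ref{cor5}, which provides $\mathcal{FI}_n = \langle G_n \rangle$, to obtain the upper bound $\mathrm{rank}\,\mathcal{FI}_n \leq |G_n|$. Then I would apply Proposition \ref{prop7} to obtain the matching lower bound: for any generating set $G$ of $\mathcal{FI}_n$, $|G| \geq 5$ when $n=3$ and $|G| \geq \tfrac{n-5}{2} + \lfloor\tfrac{n+6}{4}\rfloor \lfloor\tfrac{n+7}{4}\rfloor$ when $n \geq 5$. So the whole argument reduces to computing $|G_n|$.

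For $n=3$, $G_3 = \{\gamma_3, \alpha_1, \alpha_2, \beta_2^{odd}, \beta_2^{even}\}$ has exactly 5 elements, so $\mathrm{rank}\,\mathcal{FI}_3 = 5$. For $n \geq 5$ I would count the contributions of $G_n$ piece by piece: one element $\gamma_n$; the odd-index $\alpha_i$'s with $i \leq \tfrac{n+1}{2}$ (which number $\lceil \tfrac{n}{4}\rceil$, as verified by the identity $\lceil\tfrac{n}{4}\rceil = |\{m \in \{1,\dots,\tfrac{n+1}{2}\} : m \text{ odd}\}|$); the even-index $\alpha_i$'s with $i \in \{2,4,\dots,n-3\}$, giving $\tfrac{n-3}{2}$ elements; the pairs $\beta_i^{odd},\beta_i^{even}$ for even $i \leq \tfrac{n+1}{2}$, contributing $2\lfloor\tfrac{n+1}{4}\rfloor$; and the $\alpha_{i,j}$'s indexed by odd pairs $(i,j)$ with $4 \leq j-i < n-1$ and $i \leq n-j+1$. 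A short combinatorial count shows this last family has exactly $\lfloor\tfrac{n}{4}\rfloor \lfloor\tfrac{n+2}{4}\rfloor - 1$ elements (precisely the count already established inside the proof of Proposition \ref{prop7}).

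Summing the six counts gives
\[
|G_n| = 1 + \left\lceil\tfrac{n}{4}\right\rceil + \tfrac{n-3}{2} + 2\left\lfloor\tfrac{n+1}{4}\right\rfloor + \left\lfloor\tfrac{n}{4}\right\rfloor\left\lfloor\tfrac{n+2}{4}\right\rfloor - 1.
\]
The only remaining obstacle is the algebraic simplification of this expression into $\tfrac{n-5}{2} + \lfloor\tfrac{n+6}{4}\rfloor\lfloor\tfrac{n+7}{4}\rfloor$. This reduces to the two floor-function identities $\lceil\tfrac{n}{4}\rceil + 2\lfloor\tfrac{n+1}{4}\rfloor = n - \lfloor\tfrac{n}{4}\rfloor$ and $n - \lfloor\tfrac{n}{4}\rfloor + \lfloor\tfrac{n}{4}\rfloor\lfloor\tfrac{n+2}{4}\rfloor = \lfloor\tfrac{n+6}{4}\rfloor\lfloor\tfrac{n+7}{4}\rfloor - 1$, both of which can be checked by splitting on $n \bmod 4 \in \{1,3\}$ (the only cases since $n$ is odd). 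Combining this with the matching lower bound yields the desired equality $\mathrm{rank}\,\mathcal{FI}_n = |G_n|$, completing the proof.
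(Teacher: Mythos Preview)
Your proposal is correct and follows essentially the same approach as the paper: combine the upper bound from Corollary~\ref{cor5} with the lower bound from Proposition~\ref{prop7}, compute $|G_n|$ term by term, and simplify via the two floor identities $\lceil\tfrac{n}{4}\rceil + 2\lfloor\tfrac{n+1}{4}\rfloor = n - \lfloor\tfrac{n}{4}\rfloor$ and $n - \lfloor\tfrac{n}{4}\rfloor + \lfloor\tfrac{n}{4}\rfloor\lfloor\tfrac{n+2}{4}\rfloor = \lfloor\tfrac{n+6}{4}\rfloor\lfloor\tfrac{n+7}{4}\rfloor - 1$, which the paper itself records at the end of the proof of Proposition~\ref{prop7}.
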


\section*{Acknowledgement}

The second author was supported by the Thailand Research Fund, Grant No. MRG6180296 and Research Fund
of Faculty of Science, Silpakorn University, Grant No. SRF-JRG-2561-08 through a visiting researcher fellowship.


{\small \sf
\noindent{\sc J\"{o}rg Koppitz},
Institute of Mathematics and Informatics,
Bulgarian Academy of Sciences,
Acad. G. Bonchev Str. Bl. 8,
1113 Sofia, Bulgaria;
email: koppitz@math.bas.bg

\medskip

\noindent{\sc Tiwadee Musunthia},
Department of Mathematics,
Faculty of Science,
Silpakorn University,
73000 Nakornpathom,
Thailand;
email: tiwadee.m@gmail.com

}


\begin{thebibliography}{00}

\bibitem{Al-Kharousi&Kehinde&Umar:2010} F. Al-Kharousi, R. Kehinde and A. Umar, On the Semigroup of Partial Isometries of a Finite
Chain. Communications in Algebra 44(2), 639-647 (2010)

\bibitem{Currie&Visentin:1991} J.D. Currie and T.I. Visentin, The number of order-preserving maps of fences and crowns.
Order 8, 133--142 (1991)

\bibitem{Dimitrova&Koppitz:2009} I. Dimitrova and J. Koppitz, The maximal subsemigroups of the ideals of some semigroups of partial
injections. Discussiones Mathematicae - General Algebra and Applications 29(2), 153-167 (2009)

\bibitem{Dimitrova&Koppitz:2017} I. Dimitrova and J. Koppitz, On the semigroup of all partial fence preserving injections on a finite
set. J. Algebra Appl. 16(11), 1750223 (2017)

\bibitem{Dimitrova&Koppitz&Lohapan:2017} I. Dimitrova, J. Koppitz and L. Lohapan, Generating sets of semigroups of partial transformations
preserving a zig-zag order on $\mathbb{N}$, International Journal of Pure and Applied Mathematics 17(2), 279-289 (2017)

\bibitem{Fernandes:2001} V.H. Fernandes, The Monoid of All Injective Order Preserving Partial Transformations on a Finite Chain. Semigroup
Forum 62(2), 178-204 (2001)

\bibitem{Fernandes&Gomes&Jesus:2005} V.H. Fernandes, G.M.S. Gomes and M.M. Jesus, Presentations for some monoids of partial transformations on
a finite chain. Communications in Algebra 33(2), 587-604 (2005)

\bibitem{Musunthia&Koppitz&Vitor:2017} V.H. Fernandes, J. Koppitz and T. Musunthia, The Rank of the Semigroup of All Order-Preserving
Transformations on a Finite Fence. Bull. Malays. Math. Sci. Soc. https://doi.org/10.1007/s40840-017-0598-1 (2017)

\bibitem{Ganyushkin&Mazorchuk:2009}  O. Ganyushkin and V. Mazorchuk, Classical Finite Transformation Semigroups: An Introduction.
Springer Verlag, London (2009)

\bibitem{Howie:1995} J.M. Howie, Fundamentals of Semigroup Theory. Clarendon Press, Oxford (1995)

\bibitem{Jendana&Srithus:2015} K. Jendana and R. Srithus, Coregularity of order-preserving self-mapping semigroups of fences.
Commun. Korean Math. Soc. 30, 349-361 (2015)

\bibitem{Jitman&Srithus&Worawannotai:2018} S. Jitman, R. Srithus and C. Worawannotai, Regularity of semigroups of transformations with
restricted range preserving an alternating orientation order. Turkish Journal of Mathematics 42(4), 1913-1926 (2018)

\bibitem{Rutkowski:1992} A. Rutkowski, The formula for the number of order-preserving self-mappings of a fence. Order 9(2),
127-137 (1992)

\bibitem{Tanyawong&Srithus&Chinram:2016} R. Tanyawong, R. Srithus and R. Chinram, Regular subsemigroups of the semigroups of transformations
preserving a fence. Asian-European Journal of Mathematics 9(1), 1650003(2016).

\bibitem{Umar:1993} A. Umar, On the semigroups of partial one-to-one order-decreasing finite
transformations. Proceedings of the Royal Society of Edinburgh Section A Mathematics 123(02), 355-363 (1993)

\end{thebibliography}
\end{document}